\documentclass[12pt]{article}
\usepackage{amssymb}
\usepackage{amsmath,bm,natbib,graphics,mathrsfs,graphicx,epsfig,subfigure,placeins,indentfirst,setspace,enumerate,enumitem,mhchem,caption,varioref,booktabs,tabularx,color,epstopdf,multirow}
\bibliographystyle{harvard}
\setcitestyle{authoryear,open={(},close={)}}
\usepackage{amsthm}
\parindent 0.5 cm
\makeatletter \oddsidemargin  -.1in \evensidemargin -.1in
\textwidth 11.5cm \topmargin 0.0cm \textheight 19.5cm
\setlength\textheight{8.6in} \setlength\textwidth{6.5in}
\setlength\topmargin{0.0in}
\begin{document}
	\newcommand{\bea}{\begin{eqnarray}}
		\newcommand{\eea}{\end{eqnarray}}
	\newcommand{\nn}{\nonumber}
	\newcommand{\bee}{\begin{eqnarray*}}
		\newcommand{\eee}{\end{eqnarray*}}
	\newcommand{\lb}{\label}
	\newcommand{\nii}{\noindent}
	\newcommand{\ii}{\indent}
	\newtheorem{theorem}{Theorem}[section]
	\newtheorem{example}{Example}[section]
	\newtheorem{corollary}{Corollary}[section]
	\newtheorem{definition}{Definition}[section]
	\newtheorem{lemma}{Lemma}[section]
	\newtheorem{remark}{Remark}[section]
	\newtheorem{proposition}{Proposition}[section]
	\numberwithin{equation}{section}
	\renewcommand{\qedsymbol}{\rule{0.7em}{0.7em}}
	\renewcommand{\theequation}{\thesection.\arabic{equation}}
	\renewcommand\bibfont{\fontsize{10}{12}\selectfont}
	\setlength{\bibsep}{0.0pt}
		\title{\bf Copula-based extropy measures, properties and dependence in bivariate distributions}
	
			\author{Shital {\bf Saha}\thanks {Email address: shitalmath@gmail.com,~520MA2012@nitrkl.ac.in}~~and~Suchandan {\bf Kayal}\thanks{Corresponding author : Suchandan Kayal (kayals@nitrkl.ac.in,~suchandan.kayal@gmail.com)}}

	\maketitle \noindent {\it Department of Mathematics, National Institute of
		Technology Rourkela, Rourkela-769008, Odisha, India} 
	\date{}
\begin{center}
\textbf{Abstract}
\end{center}
In this work, we propose extropy measures based on density copula, distributional copula, and survival copula, and explore their properties. We study the effect of monotone transformations for the proposed measures and obtain some bounds. We establish connections between cumulative copula extropy and three dependence measures: Spearman's rho, Kendall's tau, and Blest's measure of rank correlation. Finally, we propose  estimators for the cumulative copula extropy and survival copula extropy with an illustration using real life datasets.
\\	
\\
		 \textbf{Keywords:} Copula function, extropy, monotone transformation, dependence measure, resubstitution estimator.
		 \\
		 \\
		\textbf{Mathematics Subject Classification (2020):} 62B10; 60E05; 94A17.

\section{Introduction}	
The extropy, introduced by \cite{lad2015extropy} is a complementary dual of the Shannon entropy. Let $X$ be a discrete random variable (RV) with probability mass function $\{P(X=x_{i})=p_{i}>0;~i=1,\ldots,n\}.$ Then, the Shannon entropy of $X$ is (see \cite{shannon1948mathematical})
\begin{eqnarray}\label{eq1.1}
I(X)=-\sum_{i=1}^{n}p_{i}\log (p_{i}),
\end{eqnarray}
where $\log(\cdot)$ represents the logarithmic function with base $e$. The extropy of the discrete RV $X$ is given by (see \cite{lad2015extropy})
\begin{eqnarray}\label{eq1.2}
I^*(X)=-\sum_{i=1}^{n}(1-p_{i})\log (1-p_{i}).
\end{eqnarray}
 The measures given by (\ref{eq1.1}) and (\ref{eq1.2}) quantify uncertainty in contrasting styles. It can be noted from (\ref{eq1.1}) and (\ref{eq1.2}) that $I(X)=I^{*}(X)$, for $n=2.$ However, when $n>2$, we have $I(X)>I^*(X)$, that is the entropy is larger than the extropy. Similar to Shannon's axioms, the extropy is continuous with respect to its arguments. Further, $I^*(\frac{1}{n},\ldots,\frac{1}{n})$ is monotonic increasing function in $n$. There are various other properties which are shared by both entropy and extropy. For example, the extropy is invariant under permutation as well as under monotonic transformations. The uniform distribution is the maximum extropy distribution, for any size of $n$. 
 
 The concepts of entropy and extropy have been developed for the continuous type RV too in the literature. Let $X$ be a non-negative absolutely continuous RV with probability density function (PDF) $f(\cdot)$. Then, the differential entropy of $X$ is (see \cite{shannon1948mathematical})
 \begin{eqnarray}
 S(X)=-\int_{0}^{\infty}f(x)\log (f(x)) dx=E\big(-\log \big(f(X)\big)\big),
\end{eqnarray}
presenting the expected information content in a RV. The differential entropy may take negative values although the entropy given by (\ref{eq1.1}) is always non-negative. A distribution with negative differential entropy is uniform distribution in the interval $(0,a)$, with $a<1.$ The differential extropy of $X$ is developed in the style suggested by \cite{shannon1948mathematical} as
\begin{eqnarray}
J(X)=-\frac{1}{2}\int_{0}^{\infty}f^{2}(x)dx=-\frac{1}{2}E(f(X)).
\end{eqnarray}
For details, we refer to \cite{lad2015extropy}. The extropy is always negative. We recall that the extropy is useful in scoring the forecasting distribution. \cite{gneiting2007strictly} showed that in the total log scoring method, the expected score of a forecasting distribution equals the negative sum of the entropy and extropy of this distribution. Further, it can be observed that there is a close relationship between the extropy and the informational energy, defined for a RV $X$ as $IE(X)=\int_{0}^{\infty}f^{2}(x)dx.$ We refer to \cite{onicescu1966theorie} for some properties of the informational energy. We observe that exploration of properties of extropy just like entropy has received a considerable interest from various authors in recent past. Few recent useful references in this direction are \cite{gupta2023some}, \cite{nair2023some},  and \cite{toomaj2023extropy}.

Consider two non-negative absolutely continuous RVs $X$ and $Y$ with joint PDF $f_{X,Y}(\cdot,\cdot).$ Then, the differential joint entropy  is given by 
 \begin{eqnarray}\label{eq1.5}
S(X,Y)=-\int_{0}^{\infty}\int_{0}^{\infty}f_{X,Y}(x,y)\log (f_{X,Y}(x,y)) dx dy=E\big(-\log (f_{X,Y}(X,Y))\big).
\end{eqnarray}
The joint entropy given by (\ref{eq1.5}) provides the amount of uncertainty contained in the RVs $X$ and $Y$, when they are observed together. For more than two RVs, the joint entropy  can be defined similarly. In an analogous manner, the joint extropy (also known as the bivariate extropy) of the pair of RVs $(X,Y)$ is defined as follows (see \cite{balakrishnan2022weighted}):
\begin{eqnarray}\label{eq1.6}
J(X,Y)=\frac{1}{4}\int_{0}^{\infty}\int_{0}^{\infty}f_{X,Y}^{2}(x,y)dxdy=\frac{1}{4}E(f_{X,Y}(X,Y)).
\end{eqnarray}
The joint extropy for $n$-dimensional random vector $(X_{1},\ldots,X_{n})$, denoted by $J(X_{1},\ldots,X_{n})$ can be defined using the joint PDF of $n$ RVs. In this case, we need to multiply the integral by the factor $(-\frac{1}{2})^{n}$. Further, we note that when the RVs $X$ and $Y$ are independent, we have $J(X,Y)=J(X)J(Y)$. We refer to \cite{balakrishnan2022weighted} for some discussions about the joint extropy measure in (\ref{eq1.6}). Thus, it is clear that to develop the concept of extropy for univariate, bivariate, and multivariate probability distributions, we need to use univariate, bivariate, and multivariate distributions in its formula. 

The probability models for dependent bivariate or multivariate observations can be specified by the joint cumulative distribution function (CDF).
Using the result of \cite{sklar1959fonctions}, if $F_{X,Y}(\cdot,\cdot)$ denotes the joint CDF of the RVs $X$ and $Y$, with marginal CDFs $F_{X}(\cdot)$ and $F_{Y}(\cdot)$, respectively, then 
\begin{eqnarray}
F_{X,Y}(x,y)=C(F_{X}(x),F_{Y}(y))=C(u,v),~u,v\in[0,1],
\end{eqnarray}
where $C(\cdot,\cdot)$ is known as the (distributional) copula function and $u=F_{X}(\cdot)$, $v=F_{Y}(\cdot)$. Thus, a multivariate (here bivariate) distribution function can be coupled with its marginal distribution functions. We recall that a function $C:[0,1]\times[0,1]\rightarrow[0,1]$ is called a bivariate copula if it satisfies 
\begin{itemize}
\item $C(0,r)=0=C(r,0)$ and $C(1,r)=r=C(r,1),$ for all $r\in[0,1]$;
\item  $C(u_2,v_2)-C(u_2,v_1)-C(u_1,v_2)+C(u_1,v_1)\ge0,$ for all $u_1,u_2,v_1,v_2\in[0,1]$ such that $u_1\le u_2$ and $v_1\le v_2$.
\end{itemize}
We refer to \cite{nelsen2006introduction} for properties and applications of the copula.  We recall that copulas are the starting point, used to construct
families of bivariate distributions. \cite{zhang2008nonparametric} explained copulas as a useful tool for modelling dependency structure. Copulas can be applied in various areas, such as the probabilistic evaluation of flood risks (see \cite{zhang2013copula}), the
structure of dependence between foreign currency and stock markets (see \cite{wang2013revisit}), and simulation studies of sea storms (see \cite{corbella2013simulating}).
Several researchers introduced information measures, coherent systems, approximate likelihood with proxy variables for parameter
estimation in high-dimensional factor and studied various properties based on copula. Further, readers may refer to \cite{schepsmeier2014derivatives},  \cite{genest2014tests}, \cite{navarro2018distribution}, \cite{chen2022goodness}, \cite{kato2022copula}, and  \cite{pumi2023novel} for some more properties and application of copula.

 In probability and statistics, knowing multivariate dependencies  in a random vector is a challenging problem. Usually, it happens due to the mixture of different relationship in the variables. It is shown by \cite{james2017multivariate} that the Shannon entropy is unable to capture such multivariate dependencies. As a result, the concept of copula entropy has been considered by various authors, see for example \cite{ma2011mutual}, \cite{zhao2011copula}, \cite{chen2013measure}, \cite{singh2018copula}, and \cite{sun2019copula}. Note  that the copula entropy is a combination of the concept of entropy and copula. For a bivariate random vector $(X,Y)$, the copula entropy is given by 
\begin{eqnarray}\label{eq1.8}
S_{c}(X,Y)=-\int_{0}^{1}\int_{0}^{1}c(u,v)\log (c(u,v))du dv,
\end{eqnarray}
where $c(\cdot,\cdot)$ is the density function corresponding to the copula function $C(\cdot,\cdot)$. Several researchers have studied copula entropy. \cite{chen2013measure} used copula entropy to measure the correlation between river flows.  \cite{chen2014copula} proposed artificial neural network model  for rainfall-runoff simulation  using copula entropy.  \cite{wang2020image} introduced  nonsubsampled contourlet transform hidden Markov tree model based on copula entropy for capturing anisotropy and directional features of images. \cite{tabatabaei2022ranking} used copula entropy for  investigating the rain-gauge network and rank the rain-gauge stations. 

\cite{hosseini2021discussion} introduced co-copula and dual copula inaccuracy measures and discussed their various properties. Recently, \cite{sunoj2023survival} introduced survival copula entropy  of a bivariate random vector $(X,Y)$ based on survival copula $\bar C(\cdot,\cdot),$ which is given by
\begin{align}\label{eq1.9}
S_{\bar C}(X,Y)=-\int_{0}^{1}\int_{0}^{1}\bar C(u,v)\log (\bar C(u,v))dudv.
\end{align}
It is remarked that using survival copula, the joint survival function is coupled to its marginal survival functions. \cite{sunoj2023survival} studied various properties of the measure in (\ref{eq1.9}) with dependence in bivariate distributions.

To the best of our knowledge, there is no work available in the literature on the present topic. In this work, we propose the notion of copula extropy (henceforth abbreviated as CEx), a complementary dual of copula entropy and study different properties. In addition to the copula extropy, we have also introduced two other information measures, say cumulative copula extropy (CCEx) and survival copula extropy (SCEx) associated with copula and survival copula, respectively. We recall that the copula models have some attractive properties and advantages over probability models, which are provided below.
\begin{itemize}
	\item The copula models combine any univariate marginal distributions, not necessarily from the same class of distributions.
	\item It is known that the complexity of a multivariate probability model increases with respect to the dimensions. However, there exist copula models (elliptic copula), for which the complexity increases at slower  rate than the probability models. 
	\item The copula models are useful to encompass several existing multivariate models. Further, one can employ the copula models for generating many other probability models.
	\item The copula gives way more options when explaining relationship between different variables, since they do not restrict the dependence structure to be linear.
\end{itemize}

The major novelties of the present work are mentioned as follows through several points.
\begin{itemize}
\item We propose new information measures based on density copula, distributional copula, and survival copula, and discuss their various properties. An inequality between the copula entropy and copula-based extropy has been obtained. We study the effects of monotone transformations for the proposed measures.
\item We have proposed horizontal, vertical, and diagonal copula extropy measures based on sections of copula and discussed properties. We have obtained an upper bound of the diagonal copula extropy. We have also proposed dual and co-copula extropy and established relations with CCEx and SCEx, respectively.
\item Further, we have discussed some properties of CCEx for PQD and NQD random vector. The relations between well known dependence measures (Spearman's rho, Kendall's tau and Blest's measure of rank correlation) with  proposed measure CCEx have been obtained.
\item Finally, we have introduced empirical CCEx and  empirical SCEX and illustrate examples for implementation in practical problems.  
\end{itemize}  

The  sequence of the presentation is as follows. In Section $2$, we propose CEx and discuss the effects under monotone transformation. In Section $3$, we propose CCEx, horizontal, vertical and diagonal copula extropy and also introduce dual copula extropy and discusses various properties. In Section $4$, we introduce SCEx and co-copula extropy, and study some of their properties. The relations of CCEx with Spearman's rho, Kendall's tau, and Blest's measure of rank correlation have been obtained in Section $5$. In Section $6$, we propose resubstitution estimators for CCEx and SCEx and then apply them in a real data set. Finally, Section $7$ concludes the paper.

 Throughout the paper, the random variables are assumed to be non-negative and absolutely continuous. The derivatives and integrations whenever used are assumed to exist.

\section{Copula extropy}
Here, we propose a new uncertainty measure associated with a density copula, dubbed as copula extropy, which can be used as an alternative of risk measure. The definition of the bivariate CEx is provided below.
\begin{definition}\label{de2.1}
Let $C(\cdot,\cdot)$ be the copula function of a random vector $(X,Y)$. Then, the CEx is defined as 
\begin{eqnarray}\label{eq2.1}
J_c(X,Y)=\frac{1}{4}\int_{0}^{1}\int_{0}^{1}c^2(u,v)dudv=\frac{1}{4}E(c(U,V)),
\end{eqnarray}
where $c(u,v)=\frac{d^2C(u,v)}{dudv}$ is the bivariate density copula corresponding to $C(u,v)$.
\end{definition}

\begin{remark}
	The information measure in (\ref{eq2.1}) can also be defined for a general $k$-dimensional random vector. In this case, it is  needed to replace the bivariate copula density by a multivariate copula density in (\ref{eq2.1}) with a multiplying factor $(-\frac{1}{2})^{k}$ in place of $\frac{1}{4}$.
\end{remark}

We note that the copula extropy in (\ref{eq2.1}) is clearly non-negative, which is one of the key properties of a risk measure.  Next, we obtain closed-form expressions of the copula extropy for some well-known copula functions, such as product copula, iterated Farlie Gumbel Morgenstern (FGM) copula (see p. 42, \cite{lai2009continuous}) and Nelsen's polynomial copula (see p. 43, \cite{lai2009continuous}) in Table $1$.

\begin{table}[h!]
	\caption {Closed-form expressions of the copula extropy of some well-known copula functions. }
	\centering 
	\scalebox{0.74}{\begin{tabular}{c c c c c c c c } 
			\hline\hline\vspace{.1cm} 
			Name & Copula function $(C(u,v))$ & Copula extropy ($J_c(X,Y)$) \\
			\hline
			Product copula& $uv$ &$\frac{1}{4}$	\\	[2EX]
			{Iterated FGM copula} &	$[\alpha (1-u)(1-v)+\beta uv(1-u)(1-v)+1]uv, ~|\alpha|\le 1,~ |\beta|\le1$	& $\frac{1}{4}[1+\frac{\alpha^2}{9}+\frac{\alpha \beta}{18}+\frac{4\beta^2}{225}]$\\[2EX]	
			Nelsen's polynomial copula & $[1+2\alpha (1-u)(1-v)(1+u+v-2uv)]uv,~0\le \alpha\le \frac{1}{4}$ & $\frac{1}{4}[1-8\alpha+ \frac{50938}{1575}\alpha^2]$\\[1EX]
			\hline
	\end{tabular}} 
	\label{tb1} 
\end{table}

Note that the FGM copula is a particular case of the iterated FGM copula. Thus, the CEx for FGM copula can be easily obtained from the expression of the CEx of iterated FGM copula by taking $\beta=0$. Now, we plot the copula extropy for FGM and Nelsen's polynomial copulas in Figures $1(a)$ and $1(b)$, respectively. The surface plot of the CEx of iterated FGM copula is presented in Figure $1(c)$. The graphs in Figures $1(a,b,c)$ establish that the CEx is not monotone with respect to its parameter(s) in general. 

\begin{figure}[h!]\label{fig2}
	\centering
	\subfigure[]{\label{c1}\includegraphics[height=1.26in]{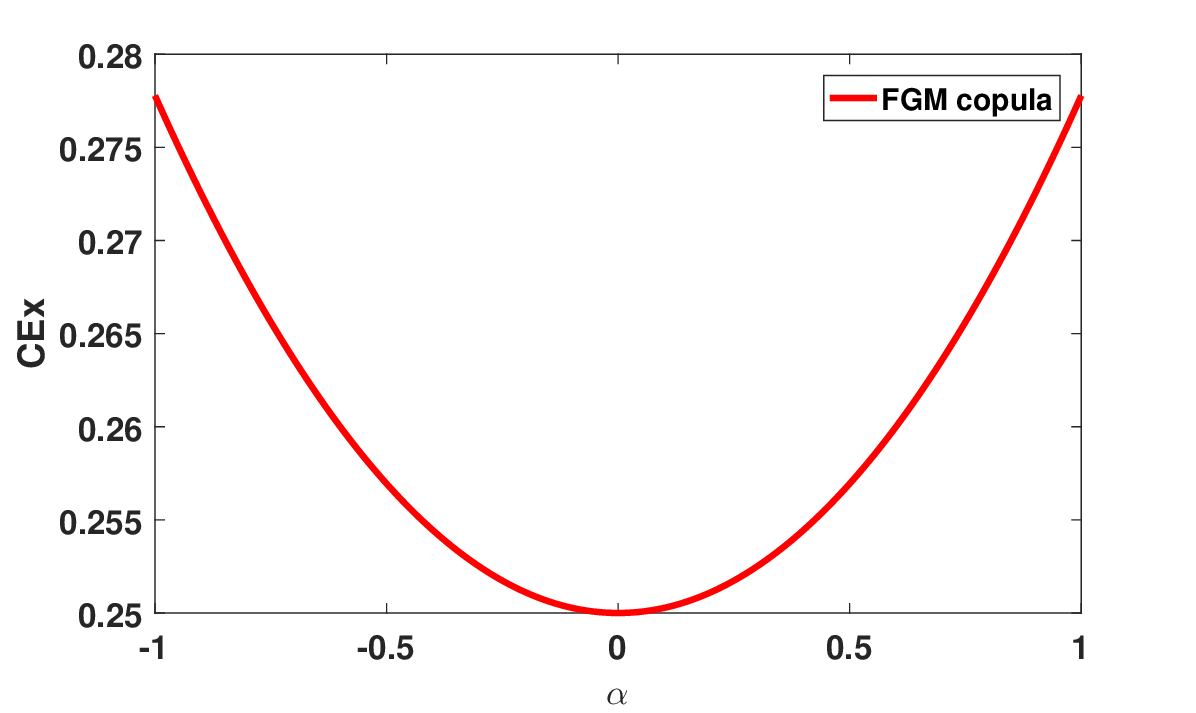}}
	\subfigure[]{\label{c1}\includegraphics[height=1.26in]{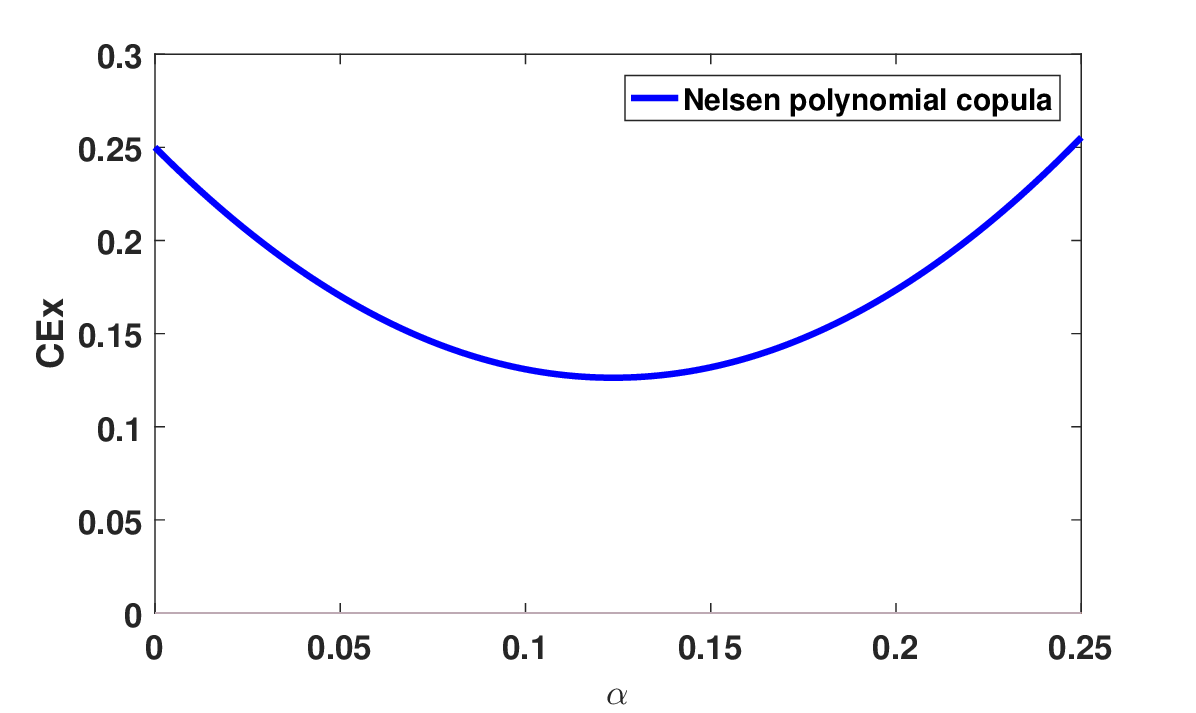}}
	\subfigure[]{\label{c1}\includegraphics[height=1.26in]{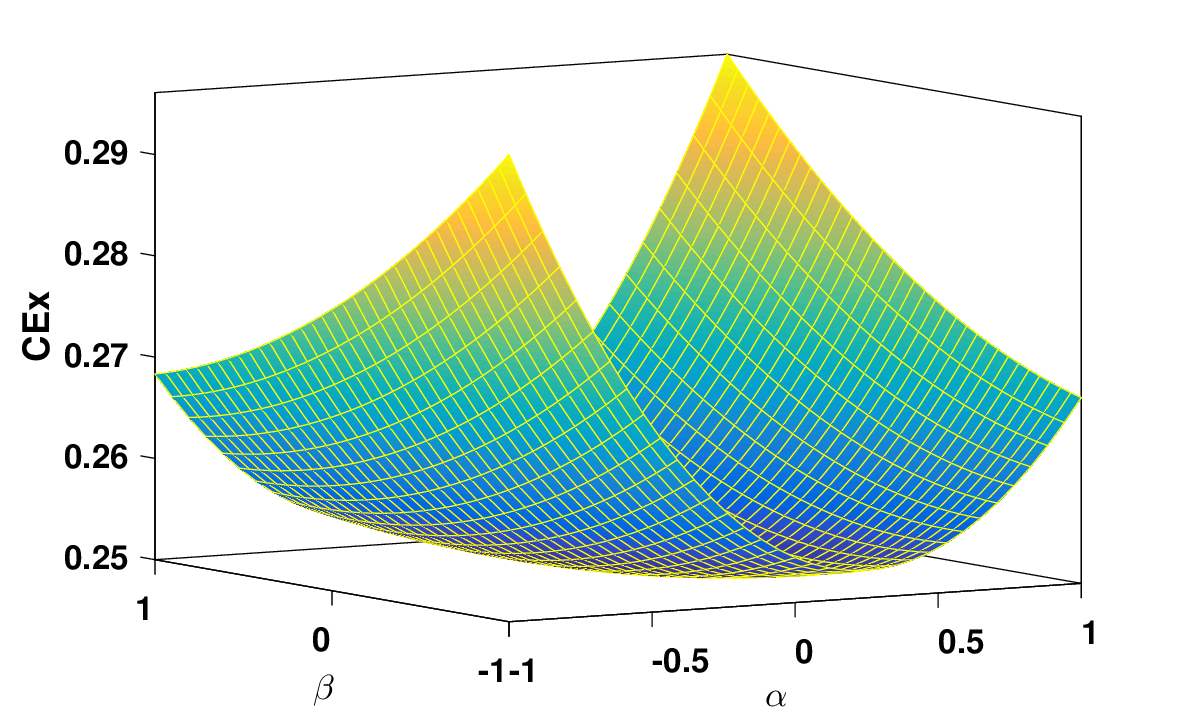}}
	\caption{ Graphs for CEx of $(a)$ FGM and $(b)$ Nelsen's polynomial copula functions. $(c)$ Surface plot  of the  CEx of iterated FGM copula.}
	\end{figure}

Copula entropy is a well-established concept proposed recently by \cite{ma2011mutual}. Thus, it would be interesting to establish connection between the copula entropy given by (\ref{eq1.8}) and the CEx in  (\ref{eq2.1}). The following proposition throws some insightful lights in this direction. Specifically, it gives a lower bound of the CEx in terms of the copula entropy. 

\begin{proposition}\label{pro2.1}
Suppose $C(\cdot,\cdot)$ is the copula function of $(X,Y)$. Then,
\begin{align*}
J_c(X,Y)\ge \frac{1}{4}\Big[1-S_c(X,Y)\Big].
\end{align*}
\end{proposition}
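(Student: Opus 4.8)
The plan is to unwind both sides in terms of the copula density $c(u,v)$ and reduce the claimed bound to a pointwise elementary inequality. By Definition~\ref{de2.1}, $J_c(X,Y)=\frac14\int_0^1\int_0^1 c^2(u,v)\,du\,dv$, and by (\ref{eq1.8}), $S_c(X,Y)=-\int_0^1\int_0^1 c(u,v)\log c(u,v)\,du\,dv$. Since $c(\cdot,\cdot)$ is a probability density on $[0,1]^2$, it satisfies $\int_0^1\int_0^1 c(u,v)\,du\,dv=1$. Hence the asserted inequality $J_c(X,Y)\ge\frac14[1-S_c(X,Y)]$ is equivalent, after multiplying by $4$, to
\begin{align*}
\int_0^1\int_0^1 c^2(u,v)\,du\,dv\ \ge\ 1+\int_0^1\int_0^1 c(u,v)\log c(u,v)\,du\,dv.
\end{align*}

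Next I would replace the constant $1$ on the right-hand side by $\int_0^1\int_0^1 c(u,v)\,du\,dv$ and collect everything under a single integral sign, so that the inequality becomes
\begin{align*}
\int_0^1\int_0^1 c(u,v)\Big[\,c(u,v)-1-\log c(u,v)\,\Big]\,du\,dv\ \ge\ 0.
\end{align*}
Now the argument reduces to the classical bound $\log t\le t-1$ for every $t>0$, which gives $t-1-\log t\ge0$; applying this with $t=c(u,v)$ at each point where $c(u,v)>0$, and using $c(u,v)\ge0$ throughout, shows the integrand is non-negative almost everywhere (on the set where $c(u,v)=0$ one uses the usual convention $c\log c=0$, so that point contributes $0$). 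Integrating a non-negative function yields the claim.

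I do not anticipate a genuine obstacle here; the only points requiring a word of care are (i) noting that $\int\int c\,du\,dv=1$ so that the "$1$" can be absorbed, and (ii) the measure-zero bookkeeping for the set $\{c=0\}$ together with the convention $0\log 0=0$, which is already implicit in the definition (\ref{eq1.8}) of $S_c$. One may also remark that equality holds precisely when $c(u,v)=1$ a.e., i.e.\ for the product (independence) copula, which is consistent with the entry $J_c=\tfrac14$ and $S_c=0$ in Table~\ref{tb1}.
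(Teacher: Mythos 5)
Your proof is correct and follows essentially the same route as the paper: the paper applies the pointwise inequality $x\log x\le x^2-x$ (which is exactly your $\log t\le t-1$ multiplied through by $t=c(u,v)$), integrates over $[0,1]^2$, and uses $\iint c\,du\,dv=1$. Your added remarks on the $\{c=0\}$ convention and the equality case for the independence copula are correct but not needed beyond what the paper does.
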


\begin{proof}
	For $x>0$, we have $ x\log (x)\le x^2-x$. Using the copula density function, we obtain
	\begin{align}\label{eq2.3}
	c(u,v)\log \big(c(u,v)\big)\le c^2(u,v)-c(u,v).
	\end{align}
	After multiplying $-1$ and  taking double integration with respect to $u$ and $v$ in (\ref{eq2.3}), it becomes
	\begin{align}\label{eq2.4}
	 -\int_{0}^{1}\int_{0}^{1}c(u,v)\log \big(c(u,v)\big)dudv\ge \int_{0}^{1}\int_{0}^{1}\{c(u,v)-c^2(u,v)\}dudv,
	\end{align}
	from which the result readily follows, completing the proof of the proposition.
\end{proof}

It can be proved that the mutual information between two random variables is always non-negative. Further, it has been established by \cite{ma2011mutual} that the copula entropy is actually equal to the negative of the mutual information, implying the copula entropy is always negative. This observation can also be  proved using the inequality 
$$x-1\le x \log (x),~x>0.$$
Next, we study copula extropy under monotone transformations. The following result (see Theorem $2.4.3$ and Theorem $2.4.4$ of \cite{nelsen2006introduction}) is useful in this direction. We use the abbreviations s.i. and s.d. for strictly increasing and strictly decreasing, respectively, in the rest of the paper.
 \begin{equation}\label{eq2.4*}
C_{\phi(X)\psi(Y)}(u,v)=\left\{
\begin{array}{ll}
C(u,v),~\text{if  $\phi$ and $\psi$  are both  s.i.;}
\\
\\
u+v-1+C(1-u,1-v),~\text{if  $\phi$ and $\psi$  are both s.d.;}
\\
\\
u-C(1-u,v),~\text{if  $\phi$ is s.d. and $\psi$ is s.i.;}
\\
\\
v-C(u,1-v),~\text{if  $\phi$ is s.i. and $\psi$  is s.d.}
\end{array}
\right.
\end{equation}

\begin{theorem}\label{th2.1}
Suppose $\phi(X)$ and $\psi(Y)$ are two strictly monotone (either increasing or decreasing) transformations of $X$ and $Y,$  respectively. Then,  

 \begin{equation*}
J_c(\phi(X),\psi(Y))=\left\{
	\begin{array}{ll}
	\displaystyle J_c(X,Y),~\text{if  $\phi$ and $\psi$  are both  s.i.;}
	\\
	\\
		\frac{1}{4}\int_{0}^{1}\int_{0}^{1} c^2(1-u,1-v)dudv,~\text{if  $\phi$ and $\psi$  are both s.d.;}
		\\
		\\
	\frac{1}{4}\int_{0}^{1}\int_{0}^{1} c^2(1-u,v)dudv,~\text{if  $\phi$ is s.d. and $\psi$ is s.i.;}
	\\
	\\
	\frac{1}{4}\int_{0}^{1}\int_{0}^{1} c^2(u,1-v)dudv,~\text{if  $\phi$ is s.i. and $\psi$  is s.d.,}
	\end{array}
	\right.
	\end{equation*}
where $c(1-u,1-v)=\frac{\partial^2 C(1-u,1-v)}{\partial u \partial v}$, $c(1-u,v)=-\frac{\partial^2 C(1-u,v)}{\partial u \partial v},$ and $c(u,1-v)=-\frac{\partial^2 C(u,1-v)}{\partial u \partial v}.$
\end{theorem}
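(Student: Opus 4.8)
The plan is to read off the copula of $(\phi(X),\psi(Y))$ from the representation in~(\ref{eq2.4*}), differentiate it twice (once in $u$, once in $v$) to obtain the associated density copula, and then substitute into the defining formula $J_c(\phi(X),\psi(Y))=\frac14\int_0^1\int_0^1 c_{\phi(X)\psi(Y)}^2(u,v)\,du\,dv$ from Definition~\ref{de2.1}. Since~(\ref{eq2.4*}) splits into four cases according to the monotonicity pattern of $\phi$ and $\psi$, the argument is repeated four times, each time being a short differentiation followed by the substitution.

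In the first case ($\phi,\psi$ both s.i.) we have $C_{\phi(X)\psi(Y)}=C$, hence $c_{\phi(X)\psi(Y)}=c$ and the first line is immediate. In the second case ($\phi,\psi$ both s.d.), differentiating $u+v-1+C(1-u,1-v)$ first in $u$ and then in $v$ and applying the chain rule gives $c_{\phi(X)\psi(Y)}(u,v)=\frac{\partial^2 C(1-u,1-v)}{\partial u\partial v}=c(1-u,1-v)$, the two sign changes from $1-u$ and $1-v$ cancelling. In the third case ($\phi$ s.d., $\psi$ s.i.), differentiating $u-C(1-u,v)$ yields $c_{\phi(X)\psi(Y)}(u,v)=-\frac{\partial^2 C(1-u,v)}{\partial u\partial v}=c(1-u,v)$ in the notation of the theorem, and the fourth case ($\phi$ s.i., $\psi$ s.d.) is symmetric, giving $c_{\phi(X)\psi(Y)}(u,v)=c(u,1-v)$. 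Inserting these four densities into Definition~\ref{de2.1} produces the four displayed expressions.

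The only delicate point — and as close as this gets to an obstacle — is the sign bookkeeping in the chain rule: replacing $u$ by $1-u$ (or $v$ by $1-v$) contributes a factor $-1$ at each differentiation, so the net sign depends on how many of the two flipped coordinates get differentiated; this is precisely why the theorem attaches a leading minus sign to the definitions of $c(1-u,v)$ and $c(u,1-v)$ but not to that of $c(1-u,1-v)$. I would also note that each transformed density is simply $c$ evaluated at a reflected argument, hence again a non-negative copula density, so the squaring in $J_c$ causes no issue; and a change of variables $u\mapsto 1-u$ and/or $v\mapsto 1-v$ in the integrals shows that all four expressions in fact coincide with $J_c(X,Y)$, recovering the invariance of extropy under strictly monotone transformations mentioned in the introduction.
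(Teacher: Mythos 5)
Your proposal is correct and follows exactly the route the paper intends: the paper omits the proof, stating only that it follows from Definition \ref{de2.1} and the transformation formula (\ref{eq2.4*}), which is precisely the differentiate-and-substitute argument you carry out, with the sign bookkeeping handled correctly. Your closing observation that a change of variables shows all four expressions equal $J_c(X,Y)$ is a valid bonus the paper does not state explicitly in the theorem.
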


\begin{proof}
 The proof of the theorem follows from Definition \ref{de2.1} and Eq. (\ref{eq2.4*}). Thus, it is omitted.
\end{proof}

\section{Cumulative copula extropy}
In the previous section, we have introduced density copula-based extropy. Here, we propose extropy based on distributional copula, and study its properties. The proposed measure is dubbed as cumulative copula extropy (CCEx). 
\begin{definition}\label{de3.1}
Let $C(\cdot,\cdot)$ be the copula function for a two-dimensional random vector $(X,Y)$. Then, the CCEx of $(X,Y)$ is defined as 
\begin{align}\label{eq3.1}
J_C(X,Y)=\frac{1}{4}\int_{0}^{1}\int_{0}^{1}C^2(u,v)dudv.
\end{align}
\end{definition}
Note that in a similar way, the CCEx can be extended for a general $k$-dimensional random vector. In this case, it is required to substitute a $k$-dimensional multivariate distributional copula in place of the bivariate copula in (\ref{eq3.1}) with a multiplying factor $(-\frac{1}{2})^{k}$ in place of $\frac{1}{4}$. In Table $2$, we present closed-form expressions of the CCEx of various copula functions, such as the product copula, extended FGM copula, iterated FGM copula, and Marshall and Olkin copula. Note that the FGM copula can be deduced from extended FGM copula by taking $p=1$. Further, the graphs of the CCEx of FGM copula, Marshall and Olkin copula (see p. 39, \cite{lai2009continuous}), and iterated FGM copula are provided in Figures $3(a)$, $3(b)$, and $3(c)$, respectively, implying that the CCEx is not monotone in general with respect to the parameters.

\begin{table}[h!]
	\caption {Cumulative copula extropy of some specific  copulas.}
	\centering 
	\scalebox{0.65}{\begin{tabular}{c c c c c c c c } 
			\hline\hline\vspace{.1cm} 
			Copula & Copula function $(C(u,v))$ & Cumulative copula extropy ($J_C(X,Y)$) \\
			\hline
			Product copula& $uv$ &$\frac{1}{16}$	\\	[2EX]
			Extended FGM copula & $[1+\theta(1-u)^p(1-v)^p]uv,~~|\theta|\le1,~p>0$ & $\frac{1}{36}+ \frac{\theta}{2(p+2)^2(p+3)^2}$\\
			&~&+$\frac{\theta^2}{4}\left[\frac{p^2}{(2p+1)(p+1)^2}+\frac{2(2p^2+6p+1)}{(p+1)(2p+1)(2p+3)^2}\right]$\\[1EX]
			{Iterated FGM copula} &	$[1+\alpha (1-u)(1-v)+\beta uv(1-u)(1-v)]uv, ~|\alpha|\le 1, |\beta|\le1$	& $\frac{1}{4}[\frac{1}{9}+\frac{\alpha}{72}+\frac{\beta}{200}+\frac{\alpha^2}{900}+\frac{241\alpha \beta}{1800}+\frac{\beta^2}{11025}]$\\[1EX]	
			{Marshall and Olkin copula} &  
			$uv\text{min}(u^{-\alpha},v^{-\beta})$=	$\begin{cases} 
			u^{1-\alpha}v,~u^{-\alpha}\le v^{-\beta},
			\\
			uv^{1-\beta},~ u^{-\alpha}\ge v^{-\beta}
			\end{cases}$
			$0\le \alpha,\beta\le 1$
			&
			$\begin{cases} 
			-\frac{1}{6(3-2\alpha)}\\
			-\frac{1}{6(3-2\beta)}
			\end{cases}$\\[1EX]			
			\hline
	\end{tabular}} 
	\label{tb1} 
\end{table}

\begin{figure}[h!]\label{fig2}
	\centering
	\subfigure[]{\label{c1}\includegraphics[height=1.26in]{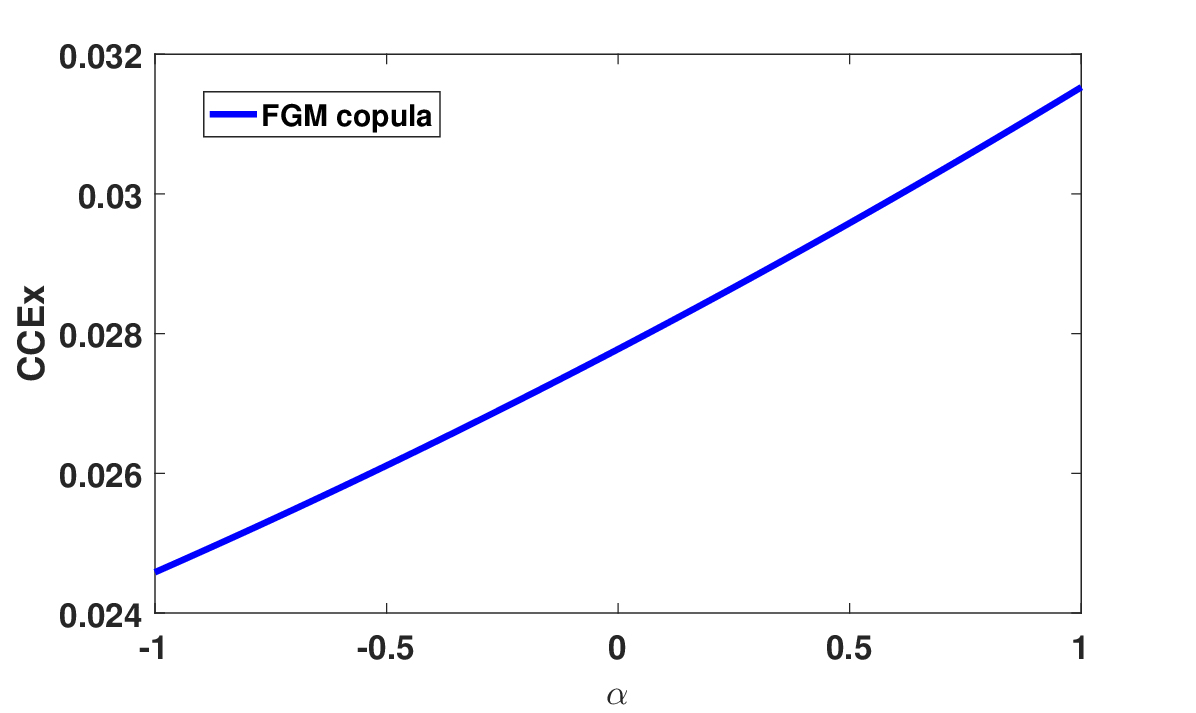}}
	\subfigure[]{\label{c1}\includegraphics[height=1.26in]{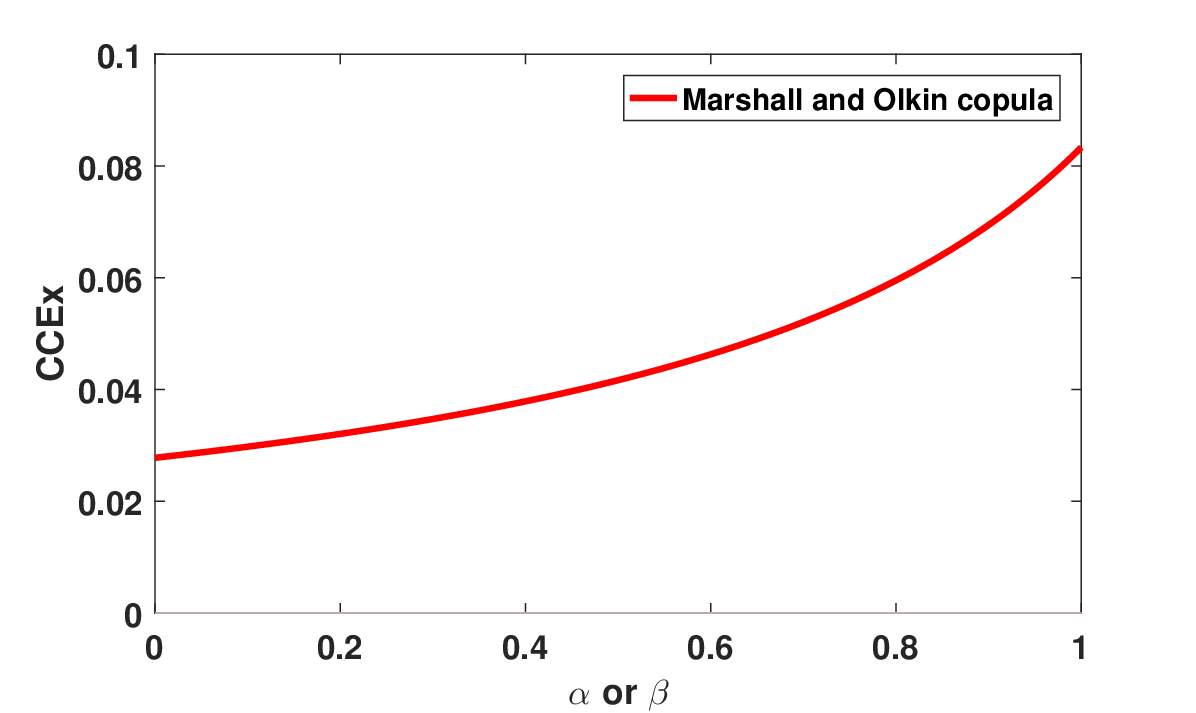}}
	\subfigure[]{\label{c1}\includegraphics[height=1.26in]{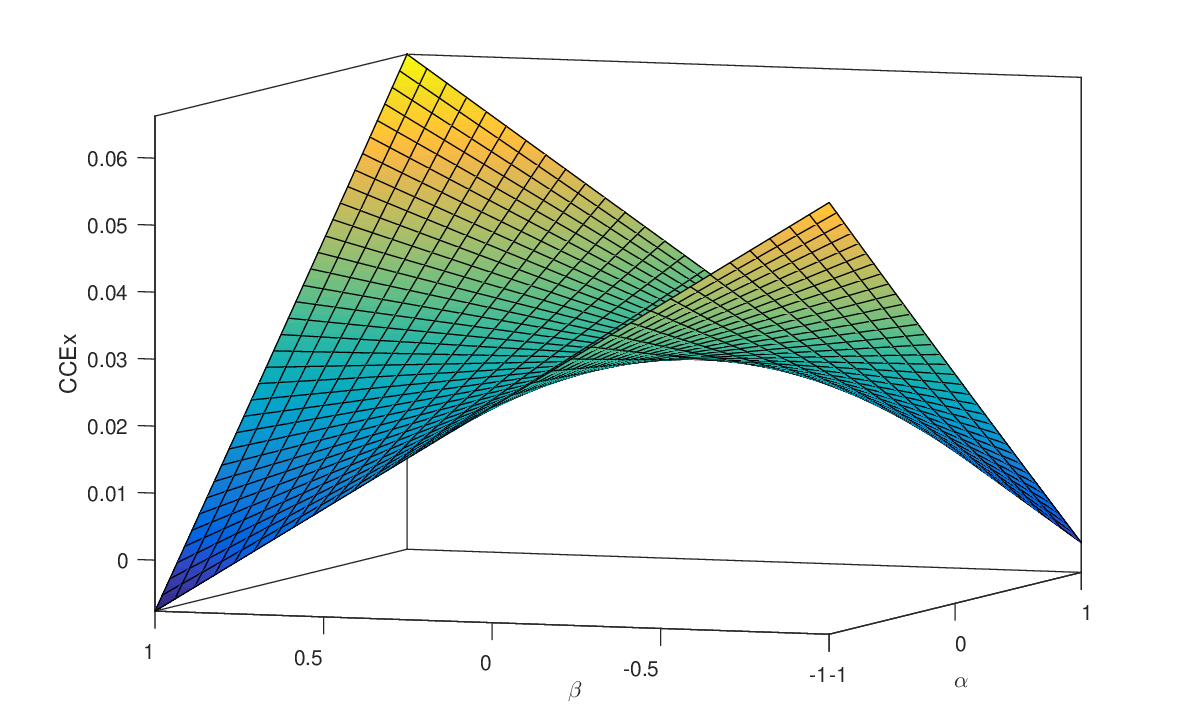}}
	\caption{Graphs of CCEx for $(a)$ FGM copula for $|\alpha|\le1$, $(b)$ Marshall and Olkin copula.  $(c)$ Surface plot of the CCEx for iterated FGM copula.}
\end{figure}

In various cases, the CCEx's are difficult to evaluate directly for some copulas.  Here, the method of transformation of random variables can be useful. Below, we show the effects of monotone transformations on CCEx.
\begin{theorem}\label{th3.1}
	Suppose $\phi(\cdot)$ and $\psi(\cdot)$ are two strictly monotone transformations of  $X$ and $Y$. Then, 
	
	\begin{equation*}
	J_C(\phi(X),\psi(Y))=\left\{
	\begin{array}{ll}
	\displaystyle J_C(X,Y),~\text{if  $\phi$ and $\psi$  are both  s.i.;}
	\\
	\\
	J_{\bar C}(X,Y),~\text{if  $\phi$ and $\psi$  are both s.d.;}
	\\
	\\
	\frac{1}{4}\int_{0}^{1}\int_{0}^{1} [u-C(u,v)]^2dudv,~\text{if  $\phi$ is s.d. and $\psi$ is s.i.;}
	\\
	\\
	\frac{1}{4}\int_{0}^{1}\int_{0}^{1} [v-C(u,v)]^2dudv,~\text{if  $\phi$ is s.i and $\psi$  is s.d.,}
	\end{array}
	\right.
	\end{equation*}
\end{theorem}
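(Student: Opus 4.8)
The plan is to reduce the statement to the change-of-variables formula for copulas under strictly monotone transformations, Eq.~(\ref{eq2.4*}), and then evaluate the defining double integral of $J_C$ on each of the four monotonicity regimes separately.

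First I would apply Definition~\ref{de3.1} to the transformed pair $(\phi(X),\psi(Y))$, writing
\begin{align*}
J_C(\phi(X),\psi(Y))=\frac{1}{4}\int_{0}^{1}\int_{0}^{1}C_{\phi(X)\psi(Y)}^{2}(u,v)\,du\,dv,
\end{align*}
so that the whole problem is controlled by the copula $C_{\phi(X)\psi(Y)}$ of the transformed vector. This is where Eq.~(\ref{eq2.4*}) enters, supplying $C_{\phi(X)\psi(Y)}$ in closed form in each case; note that no regularity beyond the standing assumptions is needed here, since only $C$ itself (and not its density) is integrated.

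Then I would dispatch the four cases. If $\phi,\psi$ are both s.i., Eq.~(\ref{eq2.4*}) gives $C_{\phi(X)\psi(Y)}=C$, so the integral is literally $J_C(X,Y)$. If $\phi,\psi$ are both s.d., Eq.~(\ref{eq2.4*}) gives $C_{\phi(X)\psi(Y)}(u,v)=u+v-1+C(1-u,1-v)$, which is precisely the survival copula $\bar C(u,v)$ of $(X,Y)$; hence the defining integral becomes $\frac14\int_0^1\int_0^1\bar C^2(u,v)\,du\,dv$, i.e. $J_{\bar C}(X,Y)$. For each of the two mixed cases I would substitute the corresponding branch of Eq.~(\ref{eq2.4*}) into the defining integral and then apply the reflection $u\mapsto 1-u$ (respectively $v\mapsto 1-v$), which is measure-preserving on $[0,1]$; this restores the reflected argument of $C$ to its natural position and collapses the integrand to $[u-C(u,v)]^2$ (respectively $[v-C(u,v)]^2$). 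The two mixed cases being mirror images, only one needs to be written out.

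The step requiring care — rather than any genuine obstacle — is exactly this bookkeeping in the mixed cases: one has to match the correct branch of Eq.~(\ref{eq2.4*}) to the asserted right-hand side and apply the reflection to the correct coordinate so that the square term and the cross term $uC$ (or $vC$) line up precisely as claimed; identifying $u+v-1+C(1-u,1-v)$ with $\bar C$ in the s.d./s.d. case is the only other ingredient, and everything else is immediate from Definition~\ref{de3.1}.
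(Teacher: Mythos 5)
Your overall strategy coincides with the one the paper intends (the paper omits the proof entirely, pointing only to Definition \ref{de3.1} and Eq.~(\ref{eq2.4*})), and your first two cases are correct: for both s.i.\ the copula is unchanged, and for both s.d.\ the transformed copula $u+v-1+C(1-u,1-v)$ is exactly the survival copula, giving $J_{\bar C}(X,Y)$.

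The gap sits precisely in the ``bookkeeping'' you defer in the mixed cases. Take $\phi$ s.d.\ and $\psi$ s.i. Substituting the printed branch of Eq.~(\ref{eq2.4*}) gives the integrand $[u-C(1-u,v)]^2$, and the change of variables $u\mapsto 1-u$ turns this into $[(1-u)-C(u,v)]^2$, \emph{not} $[u-C(u,v)]^2$: the reflection acts on the leading $u$ as well as on the argument of $C$, so the integrand does not collapse as you assert. Worse, the printed branch is itself not Nelsen's Theorem 2.4.4, which gives $C_{\phi(X)\psi(Y)}(u,v)=v-C(1-u,v)$ when $\phi$ is s.d.\ and $\psi$ is s.i.\ (and $u-C(u,1-v)$ in the other mixed case); with the correct branch your reflection yields $\frac{1}{4}\int_0^1\int_0^1[v-C(u,v)]^2\,du\,dv$, which is the expression the theorem assigns to the \emph{other} mixed case. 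Since $\int_0^1\int_0^1[u-C(u,v)]^2\,du\,dv-\int_0^1\int_0^1[v-C(u,v)]^2\,du\,dv=2\int_0^1\int_0^1(v-u)C(u,v)\,du\,dv$, the two candidate answers agree only for exchangeable copulas (they differ, e.g., for the Marshall--Olkin copula with $\alpha\neq\beta$). So the computation you describe cannot terminate in the stated right-hand sides: either the two mixed lines of the theorem must be interchanged (after correcting the mixed branches of Eq.~(\ref{eq2.4*})), or a symmetry hypothesis on $C$ must be added. A proof that treats this as routine bookkeeping is not complete.
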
 
where $J_{\bar C}(X,Y)$ is called the survival copula extropy, defined  in the next section.
\begin{proof}
	The proof of this theorem follows using similar arguments as in Theorem \ref{th2.1}, and thus it is omitted.
\end{proof}

We can also introduce another types of CCEx based on the sections of a copula (see p.11-12, \cite{nelsen2006introduction}). Mainly, there are three types of sections of  a copula. Let $C(\cdot,\cdot)$ be any copula and $a\in[0,1]$. 
 \begin{itemize}
 \item The copula $\mathcal{H}_C$ is called horizontal section  of $C(\cdot,\cdot)$ at $a$ if there exists a function from $[0,1]$ to $[0,1]$ given by $u\rightarrow C(u,a)$, i.e. $\mathcal{H}_C(u)=C(u,a)$;\
 \item The copula $\mathcal{V}_C$ is called vertical section  of $C(\cdot,\cdot)$ at $a$ if there exists a function from $[0,1]$ to $[0,1]$ given by $u\rightarrow C(a,u)$, i.e. $\mathcal{V}_C(u)=C(a,u)$;\
 \item The copula $\mathcal{D}_C$ is called diagonal section  of $C(\cdot,\cdot)$  if there exists a function from $[0,1]$ to $[0,1]$ given by $u\rightarrow C(u,u)$, i.e. $\mathcal{D}_C(u)=C(u,u)$.
 \end{itemize}
Now, based on the different sections of a copula, we have the following definition. 

 \begin{definition}\label{de3.2}
 Let $C(\cdot,\cdot)$ be a copula corresponding to $(X,Y)$. Then, for any number $a\in[0,1]$,
 \begin{itemize}
 \item [$(a)$] the cumulative horizontal copula extropy (denoted by $J_{\mathcal{H}_C}(u)$) is defined as
 \begin{align}
 J_{\mathcal{H}_C}(u)=\frac{1}{4}\int_{0}^{1}\mathcal{H}^2_C(u)du=\frac{1}{4}\int_{0}^{1}C^2(u,a)du;
 \end{align}
  \item [$(b)$] the cumulative vertical copula extropy (denoted by  $J_{\mathcal{V}_C}(u)$) is given as
  \begin{align}
  J_{\mathcal{V}_C}(u)=\frac{1}{4}\int_{0}^{1}\mathcal{V}^2_C(u)du=\frac{1}{4}\int_{0}^{1}C^2(a,u)du;
  \end{align}
  \item [$(c)$] the cumulative diagonal copula extropy (denoted by $J_{\mathcal{D}_C}(u)$) is defined as
    \begin{align}
    J_{\mathcal{D}_C}(u)=\frac{1}{4}\int_{0}^{1}\mathcal{D}^2_C(u)du=\frac{1}{4}\int_{0}^{1}C^2(u,u)du.
  \end{align}
 \end{itemize}
 \end{definition}
  
  In Table $3$, we obtain the closed-form expressions of $J_{\mathcal{H}_C}$, $J_{\mathcal{V}_C},$ and $J_{\mathcal{D}_C}$ for product and Cuadras-Auge family (see p. 15, \cite{nelsen2006introduction}) of  copulas.
\begin{table}[h!]
	\caption {Cumulative sections copula extropy for some copulas for any $a\in[0,1],$ and $\alpha\in[0,1]$.}
	\centering 
	\scalebox{0.9}{\begin{tabular}{c c c c c c c c } 
			\hline\hline\vspace{.1cm} 
			Copula & Copula function $(C(u,v))$ & $J_{\mathcal{H}_C}(u)$ & $J_{\mathcal{V}_C}(u)$ & $J_{\mathcal{D}_C}(u)$ \\
			\hline
			Product copula & $uv$ & $\frac{a^2}{12}$ & $\frac{a^2}{12}$ &$\frac{1}{20}$\\
			 Cuadras-Auge family&
			 
			$[\text{min}(u,v)]^\alpha[uv]^{1-\alpha}$=
			$\begin{cases}
			uv^{1-\alpha}, &~u\le v
			\\
			u^{1-\alpha}v, &~ v\le u\\
			\end{cases}$
			&
		$	\begin{cases}
			 \frac{a^{2-2\alpha}}{12}
				\\
			\frac{a^2}{4(3-2\alpha)}
			\end{cases}$
		 & 
		 $\begin{cases}
			 \frac{a^2}{4(3-2\alpha)}
			\\
        \frac{a^{2-2\alpha}}{12}
		\end{cases}$
				& $\frac{1}{4(3-\alpha)}$\\

					\hline
		\end{tabular}} 
\end{table}

We recall that the concept of radial symmetry is a generalization of  the notion of univariate symmetry. A two-dimensional random vector $(X_1,X_2)$ is said to be radially symmetric about $(a_1,a_2)$ if the random vector $(X_{1}-a_1,X_2-a_2)$ has the same distribution as the random vector $(a_1-X_1,a_2-X_2).$ The famous examples of radially symmetric copulas in two-dimension are Frank and FGM families. 
\begin{remark}
 Note that $J_{\mathcal{H}_C}(u)$ and $J_{\mathcal{V}_C}(u)$ are equal when random vector $(X,Y)$ is radially symmetric. For example, these two extropies of the product copula are same (see Table $3$).  
\end{remark}

In the following, we establish a bound of the cumulative diagonal copula extropy.
\begin{proposition}\label{prop3.1}
For a copula function $C(\cdot,\cdot)$, we get
\begin{align}\label{eq3.5}
J_{\mathcal{D}_C}(u)\ge\frac{1}{12}.
\end{align}
\end{proposition}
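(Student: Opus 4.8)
The natural starting point is the Fr\'echet--Hoeffding inequality, which bounds every copula below by the countermonotone copula: $C(u,v)\ge W(u,v)=\max(u+v-1,0)$ for all $u,v\in[0,1]$. Restricting to the diagonal yields the pointwise estimate $\mathcal{D}_C(u)=C(u,u)\ge \max(2u-1,0)\ge 0$. The plan is to convert this pointwise lower bound on the diagonal section into a lower bound on $J_{\mathcal{D}_C}(u)=\frac14\int_0^1 C^2(u,u)\,du$, and to supplement it with the shape constraints that any diagonal section must obey.

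First I would use that both $C(u,u)$ and $\max(2u-1,0)$ are non-negative, so squaring preserves the inequality, $C^2(u,u)\ge [\max(2u-1,0)]^2$. Integrating over $[0,1]$ and multiplying by $\frac14$ reduces the leading contribution to the elementary split integral $\frac14\int_{1/2}^1(2u-1)^2\,du$. In parallel I would invoke the structural properties that every diagonal section satisfies -- $\mathcal{D}_C$ is non-decreasing, $\mathcal{D}_C(1)=1$, and $\mathcal{D}_C$ is $2$-Lipschitz, i.e.\ $0\le \mathcal{D}_C(t)-\mathcal{D}_C(s)\le 2(t-s)$ for $s\le t$ -- since these control the admissible behaviour of $C(u,u)$, in particular on the subinterval $[0,\frac12]$ where the Fr\'echet bound is vacuous.

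The main obstacle is precisely this small-$u$ regime. On $[0,\frac12]$ the Fr\'echet lower bound is identically zero and contributes nothing, so the bare estimate delivers only $\frac14\int_{1/2}^1(2u-1)^2\,du=\frac14\cdot\frac16=\frac1{24}$, which falls short of the target constant. The decisive step is therefore to recover the missing contribution by exploiting the monotonicity and Lipschitz constraints on $\mathcal{D}_C$ over $[0,\frac12]$; here it is worth keeping in mind that $\frac1{12}=\frac14\int_0^1 u^2\,du$ is exactly the value of $J_{\mathcal{D}_C}$ at the comonotone copula $M(u,v)=\min(u,v)$, which fixes the constant appearing in the statement. Bridging the gap between the $\frac1{24}$ produced by the bare Fr\'echet estimate and the claimed $\frac1{12}$ is the part of the argument I expect to demand the most care.
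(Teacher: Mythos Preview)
Your instinct to replace the raw Fr\'echet--Hoeffding lower bound $C(u,u)\ge 2u-1$ by its non-negative truncation $\max(2u-1,0)$ before squaring is exactly right, and it is precisely the step that the paper skips. The paper's argument cites the inequality $C(u,u)\ge 2u-1$ and then says the bound ``easily follows''; the implicit move is to square both sides and use $\frac14\int_0^1(2u-1)^2\,du=\frac{1}{12}$. But $a\ge b$ does not imply $a^2\ge b^2$ when $b<0$, and $2u-1<0$ on $[0,\tfrac12)$, so that step is invalid. You diagnosed this correctly and obtained the honest consequence $J_{\mathcal D_C}\ge \frac{1}{24}$.

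The trouble is that the gap you identify between $\frac{1}{24}$ and $\frac{1}{12}$ cannot be closed by any appeal to monotonicity or the $2$-Lipschitz property of diagonal sections, because the stated inequality is false. The countermonotone copula $W(u,v)=\max(u+v-1,0)$ has diagonal $\mathcal D_W(u)=\max(2u-1,0)$, which satisfies all the structural constraints you list, and it gives $J_{\mathcal D_W}=\frac14\int_{1/2}^1(2u-1)^2\,du=\frac{1}{24}<\frac{1}{12}$. Even the product copula, with $\Pi(u,u)=u^2$, yields $J_{\mathcal D_\Pi}=\frac14\int_0^1 u^4\,du=\frac{1}{20}<\frac{1}{12}$. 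Your own remark that $\frac{1}{12}$ is the value attained at the comonotone copula $M(u,v)=\min(u,v)$ is the real clue: since $C(u,u)\le M(u,u)=u$ for every copula and both sides are non-negative, squaring \emph{is} legitimate here and gives
\[
J_{\mathcal D_C}=\frac14\int_0^1 C^2(u,u)\,du\le \frac14\int_0^1 u^2\,du=\frac{1}{12},
\]
with equality only for $M$. In short, the inequality in the proposition should be reversed; your partial analysis is sound, and the obstacle you flagged is not a missing trick but a genuine counterexample.
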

\begin{proof}
From Lemma $4.1$ in \cite{capaldo2023generalized}, we have
\begin{equation}\label{eq3.6}
C(u,u)\ge2u-1, ~~\text{for all $u\in[0,1]$.}
\end{equation}
Using (\ref{eq3.6}) in Definition \ref{de3.2}$(c)$, the result in (\ref{eq3.5}) easily follows, completing the proof.
\end{proof}

In order to justify the result in Proposition \ref{prop3.1}, we plot the cumulative diagonal copula extropy for Cuadras-Auge family in Figure $3(a),$ which is clearly larger than $\frac{1}{12}$.

\begin{figure}[h!]\label{fig2}
	\centering
	\subfigure[]{\label{c1}\includegraphics[height=1.92in]{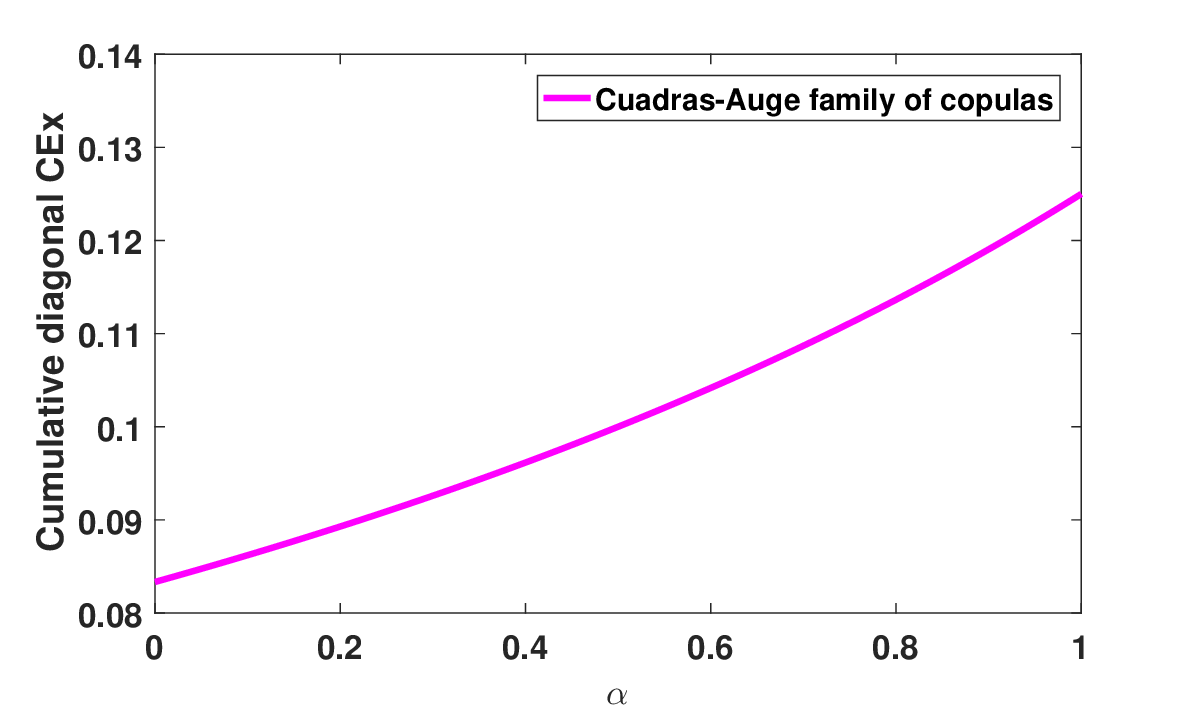}}
	\subfigure[]{\label{c1}\includegraphics[height=1.92in]{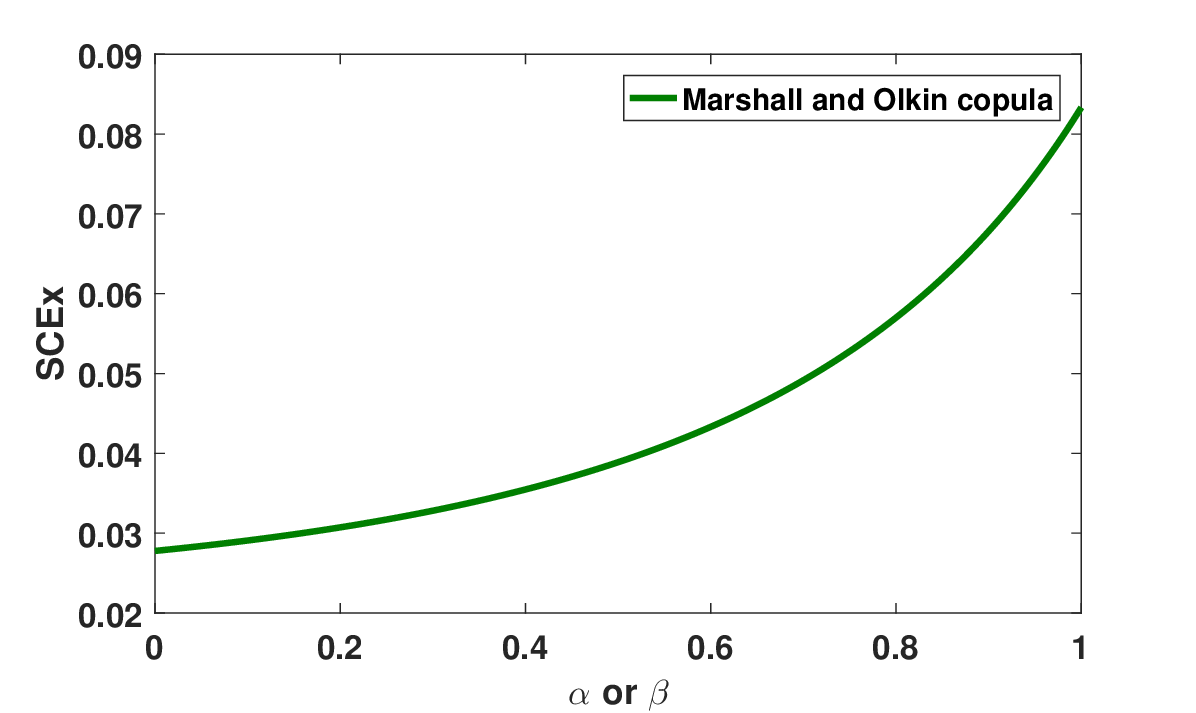}}
	\caption{$(a)$ Graph for cumulative diagonal copula extropy  of  Cuadras-Auge family of copulas for  $\alpha\in[0,1]$. $(b)$ Graph for SCEx of Marshall and Olkin  family of copulas in Table $6$}
\end{figure}

%
Suppose $X$ and $Y$ are two random variables and their corresponding copula is $C(\cdot,\cdot)$. Then, the dual of the copula $C(\cdot,\cdot)$ is defined as 
\begin{align}\label{eq3.7*}
C^*(u,v)=u+v-C(u,v),~~\text{for all $u,v\in[0,1]$}.
\end{align} 
Note that the dual of the copula in (\ref{eq3.7*}) is not a copula. However, it is closely related to a copula. The expression is a probability of an event involving $X$ and $Y$. For more details about the dual copula, we refer to \cite{nelsen2006introduction}, (p. $33$). The  co-copula and  dual copula functions can be used to obtain  inaccuracy measure in information theory. We introduce a new information measure in the following.
\begin{definition}
Let the copula and dual of the copula corresponding to $(X,Y)$ be denoted by $C(\cdot,\cdot)$ and $C^*(\cdot,\cdot),$ respectively. Then, the dual copula extropy is defined as 
\begin{align}\label{eq3.8}
J_{C^*}(X,Y)=\frac{1}{4}\int_{0}^{1}\int_{0}^{1}C{^*}^2(u,v)dudv,~~\text{for all $u,v\in[0,1]$}.
\end{align}
\end{definition}

The expressions of the dual copula extropy are given for different copulas in Table $4$.  
  
\begin{table}[h!]
	\caption {Dual copula extropy of some specific copulas.}
	\centering 
	\scalebox{0.8}{\begin{tabular}{c c c c c c c c } 
			\hline\hline\vspace{.1cm} 
			Copula & Copula function $(C(u,v))$ & Dual copula extropy ($J_C^*(X,Y)$) \\
			\hline
				Product copula& $uv$ &$\frac{11}{72}$	\\	[2EX]
	Marshall and Olkin copula &  
    $uv\text{min}(u^{-\alpha},v^{-\beta})$=	$\begin{cases} 
				u^{1-\alpha}v,~u^{-\alpha}\le v^{-\beta}
				\\
				uv^{1-\beta},~ u^{-\alpha}\ge v^{-\beta}
			\end{cases}$
				&
			$\begin{cases} 
		\frac{1}{4}\left[\frac{7}{6}-\frac{12-5\alpha}{3(2-\alpha)(3-\alpha)}-\frac{1}{3(3-2\alpha)}\right]\\
	\frac{1}{4}\left[\frac{7}{6}-\frac{12-5\beta}{3(2-\beta)(3-\beta)}-\frac{1}{3(3-2\beta)}\right]
	\end{cases}$\\				
			
	\hline	 		
	\end{tabular}} 
	\label{tb1} 
\end{table}

Next, we obtain a relation  between CCEx and dual copula extropy.
\begin{proposition}\label{prop3.2}
Let $(X,Y)$ be a two-dimensional random vector with copula $C(\cdot,\cdot)$. Further, assume that the dual of the copula is $C^*(\cdot,\cdot).$ Then, 
\begin{align}
J_C(X,Y)=J_{C^*}(X,Y)+\frac{1}{2}\mathcal{R}(X,Y)-\frac{7}{24},
\end{align} 
\end{proposition}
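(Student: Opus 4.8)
The plan is to expand the square of the dual copula and integrate term by term over the unit square. By \eqref{eq3.7*} we have $C^{*}(u,v)=u+v-C(u,v)$, so that
\[
C^{*2}(u,v)=(u+v)^2-2(u+v)C(u,v)+C^2(u,v),
\]
and since $\max(u,v)\le C^{*}(u,v)\le\min(u+v,1)$ all integrands are bounded, hence integrable. First I would integrate both sides. The left-hand side gives $\int_0^1\int_0^1 C^{*2}(u,v)\,du\,dv=4J_{C^{*}}(X,Y)$ by \eqref{eq3.8}; the last term on the right gives $\int_0^1\int_0^1 C^{2}(u,v)\,du\,dv=4J_{C}(X,Y)$ by Definition \ref{de3.1}; and the cross term gives $-2\int_0^1\int_0^1(u+v)C(u,v)\,du\,dv$, which by the definition $\mathcal{R}(X,Y)=\int_0^1\int_0^1(u+v)C(u,v)\,du\,dv$ equals $-2\mathcal{R}(X,Y)$.

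Next I would evaluate the one purely deterministic term. Writing $(u+v)^2=u^2+2uv+v^2$ and using $\int_0^1 u^2\,du=\tfrac13$ and $\int_0^1 u\,du=\tfrac12$ gives
\[
\int_0^1\int_0^1(u+v)^2\,du\,dv=\tfrac13+\tfrac12+\tfrac13=\tfrac76 .
\]

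Putting these four evaluations together yields the linear identity $4J_{C^{*}}(X,Y)=\tfrac76-2\mathcal{R}(X,Y)+4J_{C}(X,Y)$; solving it for $J_{C}(X,Y)$ and dividing by $4$ produces $J_{C}(X,Y)=J_{C^{*}}(X,Y)+\tfrac12\mathcal{R}(X,Y)-\tfrac{7}{24}$, which is exactly the claimed relation. There is essentially no analytic obstacle here: the only points that need care are bookkeeping the normalising factor $\tfrac14$ present in the definitions of $J_{C}$ and $J_{C^{*}}$, matching the sign of the cross term with the definition of $\mathcal{R}(X,Y)$, and the elementary computation $\int_0^1\int_0^1(u+v)^2\,du\,dv=\tfrac76$, which is precisely what supplies the constant $\tfrac{7}{24}$. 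No structural hypothesis on $C$ beyond its being a copula is used, so the identity holds universally.
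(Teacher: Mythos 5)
Your proof is correct and follows exactly the same route as the paper: substitute $C^{*}(u,v)=u+v-C(u,v)$ into the definition of $J_{C^{*}}$, expand the square, evaluate $\int_0^1\int_0^1(u+v)^2\,du\,dv=\tfrac{7}{6}$ to produce the constant $\tfrac{7}{24}$, and identify the cross term with $\mathcal{R}(X,Y)$. No discrepancies to report.
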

where $J_C(X,Y)$ and $J_{C^*}(X,Y)$ are given by (\ref{eq3.1}) and (\ref{eq3.8}), respectively, and
\begin{eqnarray}\label{eq3.10}
\mathcal{R}(X,Y)=\int_{0}^{1}\int_{0}^{1}(u+v) C(u,v)dudv.
\end{eqnarray}

\begin{proof}
Using (\ref{eq3.7*}) in (\ref{eq3.8}), we obtain
\begin{align*}
J_{{C^*}}(X,Y)=\frac{1}{4}\int_{0}^{1}\int_{0}^{1}{C^*}^2(u,v)dudv
&=\frac{1}{4}\int_{0}^{1}\int_{0}^{1}[u+v- C(u,v)]^2dudv\\
&=J_{{C}}(X,Y)+\frac{7}{24}-\frac{1}{2}\int_{0}^{1}\int_{0}^{1}(u+v) C(u,v)dudv,
\end{align*}
following the desired output. 
\end{proof}

The following example validates Proposition \ref{prop3.2}.

\begin{example}
		Consider Cauadras-Auge family of copulas in Table $3$. From (\ref{eq3.10}), we obtain
		\begin{align}\label{eq3.11}
		\mathcal{R}(X,Y)=\frac{12-5\alpha}{6(2-\alpha)(3-\alpha)}.
		\end{align}
		Further, 
			\begin{align}\label{eq3.12}
		J_{C^*}(X,Y)=\frac{1}{4}\bigg[\frac{7}{6}-\frac{12-5\alpha}{3(2-\alpha)(3-\alpha)}+\frac{1}{3(3-2\alpha)}\bigg]~\mbox{and}~J_C(X,Y)=\frac{1}{12(3-2\theta)}.
		\end{align}
		Now, from (\ref{eq3.11}) and (\ref{eq3.12}) we obtain
		\begin{align*}
		J_{C^*}(X,Y)+\frac{1}{2}\mathcal{R}(X,Y)-\frac{7}{24}
		=\frac{1}{12(3-2\theta)}
		=J_C(X,Y),
		\end{align*}
		validating the result in Proposition \ref{prop3.2}.
\end{example}

 Next, we will establish connection between concordance ordering and ordering between CCEx's. First, we present the definition of the concordance ordering.

\begin{definition}\label{de3.3} (\cite{nelsen2006introduction}, p. 39)
Suppose $C_1(\cdot,\cdot)$ and $C_2(\cdot,\cdot)$ are two copulas. Then, $C_1(\cdot,\cdot)$ is smaller (larger) than $C_2(\cdot,\cdot)$ in concordance order, denoted by $C_1\prec (\succ)C_2,$ if $C_1(u,v)\le (\ge)C_2(u,v),$ for all $u,v\in[0,1]$. 
\end{definition}

We recall that $C_1(\cdot,\cdot)$ is smaller than $C_2(\cdot,\cdot)$ in concordance order means that $C_1(\cdot,\cdot)$ is less positively quadrant dependent than $C_2(\cdot,\cdot).$  Further, it can be established that all beta families of generators, and many alpha families, generate families of copulas which satisfy concordance order.  It is well known that $$C_1\prec (\succ)C_2\Leftrightarrow \bar C_1\prec (\succ)\bar C_2.$$ 

The following proposition states that less positively quadrant dependent copula will produce a system with less uncertainty (risk). 
 
 \begin{proposition}\label{pro3.3}
 Suppose $C_1(\cdot,\cdot)$ and $C_2(\cdot,\cdot)$ are two copulas with  CCEx's $J_{C_1}(\cdot,\cdot)$ and $J_{C_2}(\cdot,\cdot)$, respectively. Then, 
 \begin{align}\label{eq3.7}
 C_1\prec(\succ)C_2\Longrightarrow J_{C_1}\le (\ge)J_{C_2}.
 \end{align}
 \end{proposition}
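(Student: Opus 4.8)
The plan is to exploit the fact that every copula is a non-negative function together with the monotonicity of the map $t\mapsto t^2$ on $[0,\infty)$. Concretely, I would first recall the Fr\'echet--Hoeffding lower bound: for any copula $C(\cdot,\cdot)$ one has $C(u,v)\ge \max(u+v-1,0)\ge 0$ for all $u,v\in[0,1]$. Hence both $C_1$ and $C_2$ are non-negative on the unit square.

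Next, assume $C_1\prec C_2$, i.e. $0\le C_1(u,v)\le C_2(u,v)$ for all $u,v\in[0,1]$. Since squaring is non-decreasing on the non-negative reals, this immediately gives the pointwise inequality
\begin{align*}
C_1^2(u,v)\le C_2^2(u,v),\qquad u,v\in[0,1].
\end{align*}
Integrating both sides over $[0,1]\times[0,1]$ (the integrals exist by the standing assumptions of the paper, and the inequality is preserved by monotonicity of the Lebesgue integral) and multiplying by $\tfrac14$ yields
\begin{align*}
J_{C_1}(X,Y)=\frac14\int_0^1\int_0^1 C_1^2(u,v)\,du\,dv\le \frac14\int_0^1\int_0^1 C_2^2(u,v)\,du\,dv=J_{C_2}(X,Y),
\end{align*}
which is the first assertion in (\ref{eq3.7}). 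The case $C_1\succ C_2$ follows by exchanging the roles of $C_1$ and $C_2$, giving $J_{C_1}\ge J_{C_2}$.

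Honestly, there is no real obstacle here: the only thing one must be slightly careful about is that the comparison of the squares is legitimate only because copulas are non-negative (monotonicity of $t\mapsto t^2$ fails on all of $\mathbb{R}$), so the invocation of the Fr\'echet--Hoeffding bound, or simply the defining property $C(u,0)=0$, $C(u,1)=u$ together with the $2$-increasing condition forcing $C\ge0$, is the one step that should be stated explicitly rather than glossed over. Everything else is monotonicity of the integral.
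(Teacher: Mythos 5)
Your argument is correct and is essentially the paper's own proof, which simply states that the result follows from the definitions of CCEx and concordance order; you have just made explicit the one step the paper glosses over, namely that non-negativity of copulas is what licenses passing from $C_1\le C_2$ to $C_1^2\le C_2^2$. No substantive difference in approach.
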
\label{pro3.2}
 \begin{proof}
 Using Definitions \ref{de3.1} and \ref{de3.3}, the result in (\ref{eq3.7}) easily follows. 
 \end{proof}
   
Next, we consider an example to illustrate the result in Proposition \ref{pro3.3}. 
\begin{example}
		Consider Cuadras-Auge family of copulas provided in Table $3$. For  $\alpha,\alpha_1\in[0,1]$ and $\alpha\ge \alpha_1$, it can be shown that $C^\alpha(u,v)\ge C^{\alpha_1}(u,v)$, that is,  $C^\alpha\succ C^{\alpha_1}.$ Thus, from Proposition \ref{pro3.3}, we obtain $J_{C^{\alpha}}\ge J_{C^{\alpha_{1}}}$, which can be easily noticed from Table $5$.		
		\begin{table}[h!]
			\caption {The values of cumulative copula extropy of Cuadras-Auge family of copulas for different values of $\alpha$ in $[0,1]$.}
			\centering 
			\scalebox{.95}{\begin{tabular}{c c c c c c c c c c } 
					\hline\hline\vspace{.1cm} 
						$\alpha$ & 0.1& 0.2 & 0.3& 0.4& 0.5& 0.6& 0.7& 0.8& 0.9 \\
					\hline
					$J_C{^{\alpha}}$ & 0.02976 & 0.03205 & 0.03472& 0.03789& 0.04167& 0.04630& 0.005208& 0.05952& 0.06944	\\	[1EX]
					\hline
			\end{tabular}} 
			\label{tb1} 
		\end{table}
\end{example}   
 
The converse part of Proposition \ref{pro3.3} is not true as seen from the following example.
 \begin{example}
 For a copula $C_1(u,v)=uv$ and $C_2(u,v)=uv[1+(1-u)(1-v)]$, the values of cumulative copula extropies are $J_{C_1}=\frac{1}{16}=0.0625$ and $J_{C_2}=0.0312499$. It is clear that $J_{C_1}\ge J_{C_2}$ but $C_2(u,v)\ge C_1(u,v)$ i.e. $(C_1\prec C_2)$.
 \end{example}

In the next proposition, we observe that the converse of Proposition \ref{pro3.3} is also true with different assumptions. 
 \begin{proposition}\label{pro3.4}
 Suppose $\mathcal{F}$ is the class of copulas which are concordance ordered. For any two copulas $C_1$ and $C_2\in \mathcal{F},$ and for all $u,v\in[0,1]$, we have
 \begin{align}
 J_{C_1}\le (\ge)J_{C_2}\Longrightarrow C_1(u,v)\le (\ge) C_2(u,v).
 \end{align}
 \end{proposition}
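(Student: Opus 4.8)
The plan is to exploit the fact that on a concordance-ordered class $\mathcal{F}$ the two copulas $C_1$ and $C_2$ are automatically comparable, and then to use Proposition \ref{pro3.3} itself to rule out the ``wrong'' comparison. I would argue the ``$\le$'' implication first; the ``$\ge$'' case is symmetric. So suppose $J_{C_1}\le J_{C_2}$. Since $C_1,C_2\in\mathcal{F}$ and $\mathcal{F}$ is concordance ordered, exactly one of $C_1\prec C_2$ or $C_1\succ C_2$ holds (allowing equality in either, i.e. $C_1=C_2$). If $C_1\prec C_2$, then $C_1(u,v)\le C_2(u,v)$ for all $u,v\in[0,1]$ by Definition \ref{de3.3} and we are done.

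The remaining case is $C_1\succ C_2$, i.e. $C_2(u,v)\le C_1(u,v)$ for all $u,v$. Here I would invoke Proposition \ref{pro3.3} in this ``reverse'' direction: $C_2\prec C_1$ forces $J_{C_2}\le J_{C_1}$. Combining this with the hypothesis $J_{C_1}\le J_{C_2}$ yields $J_{C_1}=J_{C_2}$, that is,
\begin{align*}
\int_{0}^{1}\int_{0}^{1}\big[C_1^2(u,v)-C_2^2(u,v)\big]\,du\,dv=0 .
\end{align*}
Since every copula is nonnegative, $0\le C_2(u,v)\le C_1(u,v)$ gives $C_1^2(u,v)-C_2^2(u,v)\ge 0$ pointwise; the integral of this nonnegative, continuous (indeed Lipschitz) function being zero forces $C_1^2\equiv C_2^2$ on $[0,1]^2$, hence $C_1\equiv C_2$. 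In particular $C_1(u,v)\le C_2(u,v)$ for all $u,v$, which closes this case. Replacing $\le$ by $\ge$ throughout and swapping the roles of $C_1$ and $C_2$ gives the parenthetical statement.

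There is no serious obstacle here; the one point that requires a little care is the step that upgrades ``equal integrals'' to ``equal functions,'' which is where continuity of the copulas (a standard fact, e.g. from \cite{nelsen2006introduction}) is used, together with the pointwise sign of $C_1^2-C_2^2$ that comes from nonnegativity of copulas. It may also be worth remarking explicitly in the statement or proof that the hypothesis ``$\mathcal{F}$ is concordance ordered'' is exactly what prevents the counterexample of the preceding Example, where the two copulas are not concordance comparable.
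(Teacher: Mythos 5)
Your argument is correct, and it is essentially a rigorous completion of what the paper does in a single line. The paper's proof writes the hypothesis as an inequality between $\frac14\int_0^1\int_0^1 C_1^2(u,v)\,du\,dv$ and $\frac14\int_0^1\int_0^1 C_2^2(u,v)\,du\,dv$ and then simply asserts the pointwise conclusion, never explicitly invoking the assumption that $C_1$ and $C_2$ are concordance comparable --- yet without that assumption the passage from ordered integrals to ordered integrands is false. Your write-up supplies exactly the missing logic: the dichotomy $C_1\prec C_2$ or $C_1\succ C_2$ furnished by $\mathcal{F}$ being concordance ordered, the appeal to Proposition \ref{pro3.3} in the reverse case to force $J_{C_1}=J_{C_2}$, and the continuity-plus-nonnegativity argument that upgrades $\int_0^1\int_0^1\bigl[C_1^2(u,v)-C_2^2(u,v)\bigr]\,du\,dv=0$ with a nonnegative integrand to $C_1\equiv C_2$; that last step is sound since copulas are Lipschitz, hence continuous. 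One correction to your closing remark: in the paper's Example 3.3 the two copulas \emph{are} concordance comparable ($uv\le uv[1+(1-u)(1-v)]$ pointwise, as the example itself states), so that example is not ruled out by the hypothesis on $\mathcal{F}$; rather, its reported values are internally inconsistent with Proposition \ref{pro3.3} and appear to come from a miscomputation of $J_C$ for the product copula (direct integration gives $\frac14\int_0^1\int_0^1 u^2v^2\,du\,dv=\frac1{36}$, consistent with Proposition \ref{prop5.1}, not $\frac1{16}$).
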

 \begin{proof}
 Let $J_{C_1}\le (\ge) J_{C_2}$. Then, from the definition of CCEx, we obtain
 \begin{align*}
 \frac{1}{4}\int_{0}^{1}\int_{0}^{1}C^2_{1}(u,v)dudv \le (\ge)\frac{1}{4}\int_{0}^{1}\int_{0}^{1}C^2_{2}(u,v)dudv,
 \end{align*}
 implying $C_1(u,v)\le (\ge)C_2(u,v),$ as desired. 
 \end{proof}

\section{Survival copula extropy}
The preceding section deals with the information measure depending on the distributional copula. Here, we introduce extropy based on survival copula, known as the survival copula extropy (SCEx). First we provide the following definition. 
\begin{definition}\label{de4.1}
Suppose $(X,Y)$ is a two-dimensional random vector with survival copula $\bar C(\cdot,\cdot)$. Then, the SCEx is defined as 
\begin{align}\label{eq4.1}
J_{\bar C}(X,Y)=\frac{1}{4}\int_{0}^{1}\int_{0}^{1}\bar C^2(u,v)dudv.
\end{align} 
\end{definition} 

We obtain the closed-form expressions of the SCEx for some copula functions, which are provided in  Table $6$. Further, we plot the SCEx of Marshall and Olkin copula in Figure $3(b)$. 
\begin{table}[h!]
	\caption {The closed-form expressions of the SCEx for some specific  copulas.}
	\centering 
	\scalebox{0.68  }{\begin{tabular}{c c c c c c c c } 
			\hline\hline\vspace{.1cm} 
			Copula & Copula function $(C(u,v))$ & Survival copula extropy ($J_{\bar C}(X,Y)$) \\
			\hline
			Product copula& $uv$ &$\frac{1}{16}$	\\	[2EX]
			FGM copula & $uv[1+\theta (1-u)(1-v)], ~~|\theta|\le1$  & $\frac{1}{4}(\frac{1}{9}+\frac{\theta}{72}+\frac{\theta^2}{900})$\\[2EX]

%
			{ Marshall and Olkin copula} &
			
			$uv\text{min}\{u^{-\alpha},v^{-\beta}\}$=	
			$\begin{cases}
			u^{1-\alpha}v,&~u^{-\alpha}\le v^{-\beta}
			\\
			uv^{1-\beta},&~ u^{-\alpha}\ge v^{-\beta}
			\end{cases}$
			&
			$\begin{cases}
			\frac{1}{12}[\frac{1}{2}+\frac{2\alpha-3}{(2-\alpha)(3-\alpha)}+\frac{1}{(3-2\alpha)}]
			\\
			\frac{1}{12}[\frac{1}{2}+\frac{2\beta-3}{(2-\beta)(3-\beta)}+\frac{1}{(3-2\beta)}]
			\end{cases}$\\	[2EX]
			{Shih and Louis's copula} &

			$\begin{cases}
			(1-\rho)uv+\rho\text{min}\{u,v\},~~\rho>0
			\\
			(1+\rho)uv+\rho(u+v-1)\xi(u+v-1),\\
			\rho\le0,~\mbox{where}~ \xi(a)=1, if a\ge0 ~\text{and} ~	\xi(a)=0, if a<0				
			\end{cases}$
			&
			$\begin{cases}
			\frac{1}{36}[2+5\rho-4\rho^2],~\rho>0
			\\
			\frac{1}{72}[2+7\rho+2\rho^2],~\rho\le0~ \text{and}~\xi(a)=1\\
			\frac{1}{72}[4-22\rho-5\rho^2],~\rho\le0~ \text{and}~\xi(a)=0
			\end{cases}$\\	[2EX]
			
			\hline
	\end{tabular}} 
	\label{tb1} 
\end{table}

Next, we consider monotone transformations of random variables to see their effect on SCEx. The following result, which can be proved similar to Theorem $2.4.3$ and Theorem $2.4.4$ of \cite{nelsen2006introduction}, is useful here:

 \begin{equation}\label{eq2.5}
\bar{C}_{\phi(X)\psi(Y)}(u,v)=\left\{
\begin{array}{ll}
\bar{C}(u,v),~\text{if  $\phi$ and $\psi$  are both  s.i.;}
\\

C(u,v),~\text{if  $\phi$ and $\psi$  are both s.d.;}
\\

u-\bar{C}(u,1-v),~\text{if  $\phi$ is s.d. and $\psi$ is s.i.;}
\\

v-\bar{C}(1-u,v),~\text{if  $\phi$ is s.i. and $\psi$  is s.d.}
\end{array}
\right.
\end{equation}

\begin{theorem}
	Suppose $\phi(\cdot)$ and $\psi(\cdot)$ are two strictly monotone transformations for $X$ and $Y$, respectively. Then, 
	
	\begin{equation*}
	J_{\bar C}(\phi(X),\psi(Y))=\left\{
	\begin{array}{ll}
	\displaystyle J_{\bar C}(X,Y),~\text{if  $\phi$ and $\psi$  are both  s.i.;}
	\\
	J_{ C}(X,Y),~\text{if  $\phi$ and $\psi$  are both s.d.;}
	\\
	
	\frac{1}{4}\int_{0}^{1}\int_{0}^{1} [u-\bar C(u,1-v)]^2dudv,~\text{if  $\phi$ is s.d and $\psi$ is s.i;}
	\\
	
	\frac{1}{4}\int_{0}^{1}\int_{0}^{1} [v-\bar C(1-u,v)]^2dudv,~\text{if  $\phi$ is s.i and $\psi$  is s.d;}
	\end{array}
	\right.
	\end{equation*}
	where $J_{ C}(X,Y)$ is  defined in (\ref{eq3.1}).
\end{theorem}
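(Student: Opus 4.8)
The plan is to follow exactly the pattern of Theorems~\ref{th2.1} and~\ref{th3.1}. By Definition~\ref{de4.1}, the quantity $J_{\bar C}(\cdot,\cdot)$ depends on a two-dimensional random vector only through its survival copula, so it suffices to identify the survival copula $\bar C_{\phi(X)\psi(Y)}$ of the transformed pair in each of the four monotonicity regimes and substitute it into (\ref{eq4.1}). Granting (\ref{eq2.5}), one simply writes $J_{\bar C}(\phi(X),\psi(Y))=\frac{1}{4}\int_{0}^{1}\int_{0}^{1}\bar C_{\phi(X)\psi(Y)}^{2}(u,v)\,du\,dv$ and reads off the four lines: when $\phi$ and $\psi$ are both s.i.\ the survival copula is unchanged, giving $J_{\bar C}(X,Y)$; when both are s.d.\ it coincides with the distributional copula $C(\cdot,\cdot)$, so the integral becomes $\frac{1}{4}\int_{0}^{1}\int_{0}^{1}C^{2}(u,v)\,du\,dv=J_{C}(X,Y)$, which is exactly (\ref{eq3.1}); and in the two mixed cases the survival copula equals $u-\bar C(u,1-v)$ and $v-\bar C(1-u,v)$ respectively, which yield $\frac{1}{4}\int_{0}^{1}\int_{0}^{1}[u-\bar C(u,1-v)]^{2}\,du\,dv$ and $\frac{1}{4}\int_{0}^{1}\int_{0}^{1}[v-\bar C(1-u,v)]^{2}\,du\,dv$.

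Since (\ref{eq2.5}) is the substantive ingredient, I would record its derivation, which the text attributes to the argument behind Theorems~2.4.3 and~2.4.4 of \cite{nelsen2006introduction}. Start from $\bar F_{X,Y}(x,y)=P(X>x,Y>y)=\bar C(\bar F_{X}(x),\bar F_{Y}(y))$. If $\phi$ is s.i.\ then $\{\phi(X)>s\}=\{X>\phi^{-1}(s)\}$ and $\bar F_{\phi(X)}(s)=\bar F_{X}(\phi^{-1}(s))$; if $\phi$ is s.d.\ then $\{\phi(X)>s\}=\{X<\phi^{-1}(s)\}$ and, by absolute continuity, $\bar F_{\phi(X)}(s)=1-\bar F_{X}(\phi^{-1}(s))$; and analogously for $\psi$. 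Inserting these identities into $P(\phi(X)>s,\psi(Y)>t)$ and, in the two mixed regimes, using the elementary decomposition $P(X<a,\,Y>b)=\bar F_{Y}(b)-\bar F_{X,Y}(a,b)$ to re-express the event $\{X<a,\,Y>b\}$ through survival functions only, one obtains each branch of (\ref{eq2.5}) after the change of variables $u=\bar F_{\phi(X)}(s)$, $v=\bar F_{\psi(Y)}(t)$.

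The step I expect to be most error-prone is precisely this last piece of bookkeeping in the mixed regimes: one must keep track of which marginal gets reflected (a $1-u$ versus a $1-v$) and make sure the argument fed to $\bar C$ is consistent with that reflection, since a slip there interchanges the roles of $u$ and $v$ in the final formula. Once (\ref{eq2.5}) is in place the theorem is immediate from Definition~\ref{de4.1}, and — as is done for Theorems~\ref{th2.1} and~\ref{th3.1} — the routine verification may reasonably be omitted.
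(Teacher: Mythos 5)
Your proposal coincides with the paper's (omitted) proof: both simply substitute the transformation formulas (\ref{eq2.5}) for the survival copula of $(\phi(X),\psi(Y))$ into Definition \ref{de4.1}, and your extra derivation of (\ref{eq2.5}) from the survival-function identities is precisely the justification the paper defers to Nelsen. One caveat worth recording: if you actually carry your own decomposition $P(X<a,\,Y>b)=\bar F_{Y}(b)-\bar F_{X,Y}(a,b)$ through, the case $\phi$ s.d., $\psi$ s.i.\ yields $v-\bar C(1-u,v)$ (and the case $\phi$ s.i., $\psi$ s.d.\ yields $u-\bar C(u,1-v)$), so the two mixed branches of (\ref{eq2.5}) — and hence of the theorem — appear to be interchanged as printed, which is exactly the bookkeeping slip you flagged as the error-prone step.
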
 

\begin{proof}
	The proof  is similar to that of Theorem \ref{th2.1}, and thus it is not presented here.
\end{proof}

\begin{remark}
 Generally, the SCEx and CCEx are different for a specific copula, i.e., $$J_C(X,Y)\neq J_{\bar C}(X,Y).$$ However, for a radially symmetric random vector $(X,Y)$ the information measures  SCEx and CCEx are equal, which can be observed for the case of FGM family of copulas.
\end{remark}

Suppose $X$ and $Y$ are two random variables with marginals $F(\cdot)$ and $G(\cdot),$ respectively. Then, the co-copula is expressed as (see \cite{nelsen2006introduction}, p. $33$)
\begin{align}\label{eq4.2}
\bar{C^*}(F(x),G(y))=P[X>x~\text{or}~ Y>y]
\Rightarrow \bar{C^*}(u,v)=1-C(1-u,1-v),
\end{align}
for all $u,v\in[0,1]$.  The probabilistic quantity $P[X>x ~\text{or}~ Y>y]$ is related to  survival copula function has importance in reliability theory and survival analysis (see  \cite{hosseini2021discussion}).
Analogous to the SCEx, the co-copula extropy  of $(X,Y)$ is defined as 
\begin{align}\label{eq4.3}
J_{\bar{C^*}}(X,Y)=\frac{1}{4}\int_{0}^{1}\int_{0}^{1}\bar{C^*}^2(u,v)dudv.
\end{align}
  
In Table $7$, we obtain closed form expressions of the co-copula extropy for different copula functions.  
\begin{table}[h!]
	\caption {The expressions of  co-copula extropy for some specific  copulas.}
	\centering 
	\scalebox{0.87}{\begin{tabular}{c c c c c c c c } 
			\hline\hline\vspace{.1cm} 
			Copula & Copula function $(C(u,v))$ & Co-copula extropy ($J_{\bar C}^*(X,Y)$) \\
			\hline
				Product copula& $uv$ &$\frac{11}{72}$	\\	[2EX]
	Marshall and Olkin copula &  $uv\text{min}(u^{-\alpha},v^{-\beta})$=
			$\begin{cases} 
				u^{1-\alpha}v, &~u^{-\alpha}\le v^{-\beta}\\
				uv^{1-\beta},&~ u^{-\alpha}\ge v^{-\beta}
        	\end{cases}$
				&
				$\begin{cases}
				 \frac{1}{4}[1-\frac{1}{(2-\theta)}+\frac{1}{3(3-2\theta)}]\\
				\frac{1}{4}[1-\frac{1}{(2-\beta)}+\frac{1}{3(3-2\beta)}]
			 \end{cases}$\\
	\hline	 		
	\end{tabular}} 
	\label{tb1} 
\end{table}

We remark that the co-copula function is closely related to survival copula function. Next, we establish a relation between SCEx and co-copula extropy.
\begin{proposition}\label{prop4.1}
Suppose $(X,Y)$ is a  two-dimensional random vector with co-copula and survival copula  $\bar{C^*}(\cdot,\cdot)$ and $\bar C(\cdot,\cdot),$ respectively. Then, 
\begin{align}\label{eq4.4}
J_{\bar C}(X,Y)=J_{\bar{C^*}}(X,Y)+\frac{1}{2}\mathcal{R}^*(X,Y)-\frac{7}{24},
\end{align}
where 
\begin{eqnarray}\label{eq4.5*}
\mathcal{R}^*(X,Y)=\int_{0}^{1}\int_{0}^{1}(u+v)\bar C(u,v)dudv.
\end{eqnarray}
\end{proposition}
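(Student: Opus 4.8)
The plan is to mirror the proof of Proposition \ref{prop3.2}, exploiting the fact that the co-copula stands in the same additive relationship to the survival copula as the dual copula does to the distributional copula. The first step is to establish the identity
\begin{align*}
\bar{C^*}(u,v)=u+v-\bar C(u,v),\qquad u,v\in[0,1].
\end{align*}
To see this, I would combine the defining expression $\bar{C^*}(u,v)=1-C(1-u,1-v)$ from (\ref{eq4.2}) with the well-known relation between the survival copula and the distributional copula, $\bar C(u,v)=u+v-1+C(1-u,1-v)$, i.e. $C(1-u,1-v)=\bar C(u,v)-u-v+1$. Substituting the latter into the former yields $\bar{C^*}(u,v)=1-(\bar C(u,v)-u-v+1)=u+v-\bar C(u,v)$, as claimed.

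Next I would plug this identity into the definition (\ref{eq4.3}) of the co-copula extropy and expand the square:
\begin{align*}
J_{\bar{C^*}}(X,Y)=\frac14\int_0^1\int_0^1[u+v-\bar C(u,v)]^2\,du\,dv
=\frac14\int_0^1\int_0^1\Big[(u+v)^2-2(u+v)\bar C(u,v)+\bar C^2(u,v)\Big]du\,dv.
\end{align*}
The third term is exactly $J_{\bar C}(X,Y)$, the middle term is $-\tfrac12\mathcal{R}^*(X,Y)$ by the definition (\ref{eq4.5*}), and the first term is the elementary constant $\tfrac14\int_0^1\int_0^1(u+v)^2\,du\,dv=\tfrac14\cdot\tfrac76=\tfrac{7}{24}$. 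Collecting these gives $J_{\bar{C^*}}(X,Y)=\tfrac{7}{24}-\tfrac12\mathcal{R}^*(X,Y)+J_{\bar C}(X,Y)$, and rearranging produces the stated formula (\ref{eq4.4}).

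There is essentially no serious obstacle here; the only point requiring care is the first step, namely correctly invoking the survival-copula/distributional-copula relation to reduce $\bar{C^*}$ to the linear-minus-$\bar C$ form, after which everything is a routine expansion and the evaluation of one elementary double integral. One should also note that all integrals are finite because $0\le\bar C\le 1$ and $0\le\bar{C^*}\le 1$ on the unit square, so the interchange of integration and the term-by-term splitting are justified.
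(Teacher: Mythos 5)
Your proof is correct and follows exactly the paper's route: the paper likewise invokes the identity $\bar{C^*}(u,v)=u+v-\bar C(u,v)$ and then repeats the expansion-of-the-square argument from Proposition \ref{prop3.2}, with the same constant $\frac14\int_0^1\int_0^1(u+v)^2\,du\,dv=\frac{7}{24}$. Your derivation of that identity from (\ref{eq4.2}) together with $\bar C(u,v)=u+v-1+C(1-u,1-v)$ is a welcome bit of extra detail that the paper leaves implicit.
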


\begin{proof}
Employing the relation
$\bar{C^*}(u,v)=u+v-\bar C(u,v),$ the proof follows using analogous arguments as in  Proposition \ref{prop3.2}.
\end{proof}

Next, we consider an example to illustrate Proposition \ref{prop4.1}.

\begin{example}
	For FGM copula (see Table $6$), we obtain
	\begin{align}\label{eq4.6}
	\mathcal{R}^*(X,Y)=\frac{1}{36}(\theta+5)
	\end{align}
	and 
		\begin{align}\label{eq4.7}
	J_{\bar C}(X,Y)=\frac{1}{4}\left(\frac{1}{9}+\frac{\theta}{72}+\frac{\theta^2}{900}\right).
	\end{align}
	Now, from (\ref{eq4.6}) and (\ref{eq4.7})
	\begin{align*}
	J_{\bar C}(X,Y)-\frac{1}{2}\mathcal{R}^*(X,Y)+\frac{7}{24}&=\frac{1}{4}\left(\frac{1}{9}+\frac{\theta}{72}+\frac{\theta^2}{900}\right)-\frac{1}{72}(\theta+5)+\frac{7}{24}
	=J_{\bar{C^*}}(X,Y).
	\end{align*}
\end{example}

Different copulas can capture varying degrees and types of dependence. Concordance order enables the comparison of these structures. Now, we show the effect of SCEx for concordance ordering between two survival copulas.

\begin{proposition}\label{prop4.2}
For two survival copulas $\bar C_1(\cdot,\cdot)$ and $\bar C_2(\cdot,\cdot)$, we have
\begin{align}
 \bar C_1\prec(\succ)\bar C_2\Longrightarrow J_{\bar C_1}\le (\ge)J_{\bar C_2}.
\end{align}
\end{proposition}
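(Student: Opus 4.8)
The plan is to imitate the argument of Proposition \ref{pro3.3} verbatim, since the survival copula extropy has exactly the same structural form as the cumulative copula extropy, only with $\bar C$ in place of $C$. First I would recall Definition \ref{de4.1}, which gives $J_{\bar C}(X,Y)=\frac{1}{4}\int_{0}^{1}\int_{0}^{1}\bar C^2(u,v)\,dudv$, and note that, because survival copulas are nonnegative on $[0,1]^2$, the map $t\mapsto t^2$ is monotone increasing on the relevant range. Hence a pointwise inequality between two survival copulas is preserved under squaring and then under integration.

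Concretely, suppose $\bar C_1\prec\bar C_2$, i.e.\ $\bar C_1(u,v)\le\bar C_2(u,v)$ for all $u,v\in[0,1]$. Since $0\le\bar C_1(u,v)\le\bar C_2(u,v)$, we get $\bar C_1^2(u,v)\le\bar C_2^2(u,v)$ for all $u,v\in[0,1]$. Integrating this inequality over the unit square and multiplying by $\tfrac14$ yields $J_{\bar C_1}\le J_{\bar C_2}$. The reverse implication, under $\bar C_1\succ\bar C_2$, follows by the symmetric argument (or by swapping the roles of the two survival copulas). This establishes both cases of the claimed statement.

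There is really no obstacle here: the only thing one must be careful about is the nonnegativity of survival copulas, which is needed to conclude $\bar C_1^2\le\bar C_2^2$ from $\bar C_1\le\bar C_2$ — this holds automatically since any (survival) copula maps $[0,1]^2$ into $[0,1]$. So the proof is a two-line monotonicity argument parallel to Proposition \ref{pro3.3}, and I would simply write:

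\begin{proof}
Suppose $\bar C_1\prec\bar C_2$, so that $0\le\bar C_1(u,v)\le\bar C_2(u,v)$ for all $u,v\in[0,1]$. Squaring preserves this inequality, i.e.\ $\bar C_1^2(u,v)\le\bar C_2^2(u,v)$ for all $u,v\in[0,1]$. Integrating over $[0,1]^2$ and multiplying by $\frac14$, we obtain from Definition \ref{de4.1} that $J_{\bar C_1}\le J_{\bar C_2}$. The case $\bar C_1\succ\bar C_2$ is handled analogously by interchanging $\bar C_1$ and $\bar C_2$.
\end{proof}
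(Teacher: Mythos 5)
Your proof is correct and is exactly the argument the paper has in mind: the paper omits the proof as ``straightforward,'' pointing implicitly to the same monotonicity-of-squaring-and-integration reasoning used for Proposition \ref{pro3.3}. Your explicit note that nonnegativity of $\bar C$ is what licenses squaring the pointwise inequality is a worthwhile detail the paper leaves unsaid, but the route is the same.
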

\begin{proof}
The proof is straightforward, and thus it is omitted. 
\end{proof}

{Next, we consider an example for illustration of the result in Proposition \ref{prop4.2}.  
	\begin{example}
		For FGM copula, it is easy to check that $\bar C_{\theta_1}\le \bar C_{\theta_2},$ for $\theta_1\le \theta_2$. Further,
		\begin{align*}
		J_{\bar C_{\theta}}(X,Y)=\frac{1}{4}\left(\frac{1}{9}+\frac{\theta}{72}+\frac{\theta^2}{900}\right).
		\end{align*}
		Thus, clearly $J_{C_{\theta_1}}\le J_{C_{\theta_2}}$ holds, for $\theta_1\le\theta_2$ (see Table $8$).
			\begin{table}[h!]
			\caption {The numerical values of the survival copula extropy for FGM copula.}
			\centering 
			\scalebox{.8}{\begin{tabular}{c c c c c c c c c } 
					\hline\hline\vspace{.1cm} 
					$\theta$ & $0.1$ & $0.2$ & $0.3$& $0.4$& $0.5$ & $0.6$ &$0.7$ & $0.8$ \\
					\hline\vspace{.1cm}
					$J_{\bar C}$ &$0.02813$ &$0.02848$ &$0.02884$& $0.02921$&$0.02958$&$0.02996$&$0.03034$& $0.03073$\\[1EX]
					\hline
			\end{tabular}} 
		\end{table}
\end{example}}
The converse of Theorem \ref{prop4.2} is not true (see Example $3.3$ which is same for SCEx). In this regard, we propose a theorem with an additional condition.
\begin{proposition}\label{prop4.3}
 Suppose $\mathcal{G}$ is the class of copulas which are concordance ordered. For any two survival copula $\bar C_1$ and $\bar C_2\in \mathcal{G}$ and for all $u,v\in[0,1]$,
 \begin{align}
 J_{\bar C_1}\le (\ge)J_{\bar C_2}\Longrightarrow \bar C_1(u,v)\le (\ge) \bar C_2(u,v).
 \end{align}
 \end{proposition}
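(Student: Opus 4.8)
The plan is to argue by contradiction, using the fact that membership in the concordance‑ordered class $\mathcal{G}$ forces $\bar C_1$ and $\bar C_2$ to be comparable, and then invoking the monotonicity already established in Proposition \ref{prop4.2}. I will carry out the $\le$ case in detail; the $\ge$ case follows by interchanging the roles of $\bar C_1$ and $\bar C_2$.

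First I would assume $J_{\bar C_1}\le J_{\bar C_2}$ and suppose, towards a contradiction, that the conclusion fails, i.e.\ $\bar C_1(u_0,v_0)>\bar C_2(u_0,v_0)$ for some $(u_0,v_0)\in[0,1]^2$. Since $\bar C_1,\bar C_2\in\mathcal{G}$ and $\mathcal{G}$ is concordance ordered, the two survival copulas are comparable, so the existence of such a point rules out $\bar C_1\prec\bar C_2$ and forces $\bar C_1\succ\bar C_2$; that is, $\bar C_1(u,v)\ge\bar C_2(u,v)$ for all $u,v\in[0,1]$, with strict inequality at $(u_0,v_0)$. Because survival copulas are continuous, the strict inequality persists on a relatively open neighbourhood of $(u_0,v_0)$ in $[0,1]^2$, hence on a set of positive Lebesgue measure. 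Squaring (which preserves order on $[0,1]$, where copulas take their values) and integrating over $[0,1]^2$ then yields $\int_0^1\int_0^1\bar C_1^2(u,v)\,du\,dv>\int_0^1\int_0^1\bar C_2^2(u,v)\,du\,dv$, i.e.\ $J_{\bar C_1}>J_{\bar C_2}$, contradicting the hypothesis. Hence $\bar C_1(u,v)\le\bar C_2(u,v)$ for all $u,v\in[0,1]$. Equivalently, one can phrase the argument as: by comparability either $\bar C_1\prec\bar C_2$ or $\bar C_1\succ\bar C_2$; the latter would give $J_{\bar C_1}\ge J_{\bar C_2}$ by Proposition \ref{prop4.2}, with equality of the integrals possible only if $\bar C_1\equiv\bar C_2$, so in every case $\bar C_1\le\bar C_2$.

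The reduction to comparability is immediate from the definition of a concordance‑ordered class, so the only point needing care is the strict inequality $J_{\bar C_1}>J_{\bar C_2}$: one must justify that a pointwise domination $\bar C_1\ge\bar C_2$ which is strict at a single point is in fact strict on a set of positive measure, which is exactly where continuity of the survival copulas enters. I expect this to be the main — and essentially the only — obstacle, and it is mild; the rest is the same bookkeeping used in the proof of Proposition \ref{pro3.4}.
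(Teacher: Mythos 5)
Your proof is correct and follows the same route the paper intends: the paper proves Proposition 4.3 only by reference to Proposition 3.4, whose one-line argument passes from the comparison of $\int_0^1\int_0^1 \bar C_1^2\,du\,dv$ and $\int_0^1\int_0^1 \bar C_2^2\,du\,dv$ to the pointwise comparison, implicitly relying on the comparability of any two members of the concordance-ordered class $\mathcal{G}$. If anything your version is more complete than the paper's, since you explicitly supply the step the paper glosses over --- that a pointwise violation at a single point, combined with comparability and the (Lipschitz) continuity of copulas, gives strict domination on a set of positive measure and hence a strict inequality between the integrals, which is exactly the contradiction needed.
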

\begin{proof}
The proof is similar to the proof of Proposition \ref{pro3.4}. Hence, it is skipped.
\end{proof}

Now, we consider an example for illustrating Proposition \ref{prop4.3}.
\begin{example}
	For Linear Spearman copula (see \cite{joe1997multivariate}, p. $148$), given by $C_\alpha(u,v)=(1-\alpha)uv+\alpha\min\{u,v\} $,~$0\le \alpha\le 1$, we obtain
	\begin{align}\label{eq4.8}
	J_{\bar C_\alpha}(X,Y)=\frac{1}{36}(\alpha^2+\alpha+1).
	\end{align}
	It is clear from (\ref{eq4.8}) that $J_{\bar C_\alpha}\ge(\le)J_{\bar C_\beta},$ for $\alpha\ge(\le)\beta$.
	Further, 
	\begin{equation}\label{eq4.11}
	\bar C_\alpha(u,v)=\left\{
	\begin{array}{ll}
	\displaystyle (1-\alpha)uv+\alpha v,~~u<v
	\\
	(1-\alpha)uv+\alpha u,~~u>v
	\end{array}
	\right.
	\end{equation}
	From (\ref{eq4.11}), for $\alpha\ge(\le)\beta$ we obtain $\bar C_\alpha\ge(\le)\bar C_\beta$ since 
	\begin{align*}
	\bar C_\alpha(u,v)-\bar C_\beta(u,v)&=\{(1-\alpha)uv+\alpha v\}-\{(1-\beta)uv+\beta v\}\\
	&=(\beta-\alpha)uv+(\alpha-\beta)v\\
	&=(\alpha-\beta)(1-u)v\ge(\le)0.
	\end{align*}
	
\end{example}

\section{Dependence measures}
Here, we will see how the proposed measures are related to some existing dependence measures between $X$ and $Y.$  Suppose $X$ and $Y$ have respective CDFs $F(\cdot)$ and $G(\cdot)$; and joint distribution function $H(\cdot,\cdot)$ with corresponding copula $C(\cdot,\cdot)$ and survival copula $\bar{C}(\cdot,\cdot)$. Then, $X$ and $Y$ are positively (negatively) quadrant dependent PQD (NQD), if $H(x,y)\ge (\le) F(x)G(y)$ for all $(x,y)\in\mathbb{R}^2$, equivalently
\begin{align}\label{eq5.1}
C(u,v)~or~\bar{C}(u,v)\ge (\le) uv,~~\text{for all $u,v\in[0,1]$.}
\end{align}  
There are several copulas satisfying PQD and NQD properties. For example, FGM, Nelsen's, Marshall and Olkin, and Cuadras-Auge families of copulas  are PQD, whereas Shih and Louis's copula for $\rho\le0$ (see Table $6$)  is NQD. In the following proposition, we obtain bound for CCEx when a bivariate random vector is PQD and NQD.
\begin{proposition}\label{prop5.1}
	For  a PQD (NQD) random vector $(X,Y)$, we obtain
	\begin{align*}
	J_C(X,Y)\ge(\le)\frac{1}{36}.
	\end{align*}
\end{proposition}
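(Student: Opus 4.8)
The plan is to exploit the PQD/NQD characterisation stated in \eqref{eq5.1}, namely that $C(u,v)\ge uv$ for a PQD vector (and $C(u,v)\le uv$ for an NQD vector), for all $u,v\in[0,1]$. Since the cumulative copula extropy $J_C(X,Y)=\frac14\int_0^1\int_0^1 C^2(u,v)\,du\,dv$ is an increasing functional of $C$ on the cone of nonnegative functions, pointwise domination of $C$ by the independence copula $\Pi(u,v)=uv$ immediately transfers to domination of the integrals.

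Concretely, first I would note that for a PQD vector we have $0\le uv\le C(u,v)$ for every $(u,v)\in[0,1]^2$, hence squaring (both sides nonnegative) gives $u^2v^2\le C^2(u,v)$. Integrating this inequality over the unit square and multiplying by $\frac14$ yields
\begin{align*}
J_C(X,Y)=\frac14\int_0^1\int_0^1 C^2(u,v)\,du\,dv\ge\frac14\int_0^1\int_0^1 u^2v^2\,du\,dv=\frac14\cdot\frac13\cdot\frac13=\frac{1}{36}.
\end{align*}
For an NQD vector the inequality $C(u,v)\le uv$ holds pointwise; both sides are still nonnegative (a copula is nonnegative, and so is $uv$), so squaring preserves the direction, giving $C^2(u,v)\le u^2v^2$, and the same integration produces $J_C(X,Y)\le\frac{1}{36}$. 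This is essentially the argument already used for Proposition~\ref{pro3.3}, specialised to the comparison between $C$ and the product copula; one could even phrase it as a corollary of that proposition since PQD (NQD) is exactly $C\succ\Pi$ ($C\prec\Pi$), and $J_\Pi(X,Y)=\frac{1}{16}\cdot\frac{4}{9}=\frac{1}{36}$ from the product-copula entry of Table~2.

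There is essentially no obstacle here: the only thing to be careful about is that squaring an inequality is legitimate only when both sides are nonnegative, which is guaranteed because copulas are $[0,1]$-valued and $uv\ge0$. I would therefore keep the proof to the two displayed lines above, remarking that the NQD case is identical with the inequalities reversed, and that the bound is attained (with equality) by the product copula, confirming sharpness.
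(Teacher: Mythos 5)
Your proof is correct and follows essentially the same route as the paper: both start from the pointwise characterisation $C(u,v)\ge(\le)\,uv$ in (\ref{eq5.1}), square (valid since both sides are nonnegative), and integrate over the unit square to obtain $\frac14\cdot\frac19=\frac1{36}$. Your added remarks on sharpness and the link to Proposition \ref{pro3.3} are fine but not needed.
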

\begin{proof}
	Under the assumptions made, from (\ref{eq5.1}), we obtain $C^2(u,v)\ge(\le)u^2v^2$. Thus,
	\begin{align*}
	\frac{1}{4}\int_{0}^{1}\int_{0}^{1}C^2(u,v)dudv\ge(\le)\frac{1}{4}\int_{0}^{1}\int_{0}^{1}u^2v^2dudv
	\Rightarrow J_C(X,Y)\ge(\le)\frac{1}{36},
	\end{align*}
	as desired.
\end{proof}

\begin{remark}\label{rem5.1}
	We note that bound for the SCEx can be obtained similarly as in Proposition \ref{prop5.1} when the random vector is PQD (NQD).
\end{remark}


In the next result, we establish conditions under which the random variables are more PQD.
\begin{proposition}\label{pro5.3}
	Suppose $C_1,C_2\in \mathcal{F}$. Then, we have $J_{C_1}\ge J_{C_2}$ if and only if $C_1$ is more PQD $(\gg)$ than $C_2$.
\end{proposition}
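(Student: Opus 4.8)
The plan is to observe that Proposition \ref{pro5.3} is essentially a repackaging of Propositions \ref{pro3.3} and \ref{pro3.4}, once the ``more PQD'' ordering is translated into the language of concordance order. First I would recall that, for copulas, the statement ``$C_1$ is more PQD than $C_2$'' (written $C_1 \gg C_2$) means nothing other than $C_1(u,v) \ge C_2(u,v)$ for all $u,v \in [0,1]$, i.e. $C_2 \prec C_1$ in the sense of Definition \ref{de3.3}. This is because PQD-ness of a copula $C$ relative to independence is the inequality $C(u,v) \ge uv$, and comparing two copulas pointwise is the natural ``degree of PQD'' comparison; this is exactly concordance order. So the proposition asserts the equivalence $C_1 \gg C_2 \iff J_{C_1} \ge J_{C_2}$ under the standing assumption $C_1, C_2 \in \mathcal{F}$.

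For the ``if'' direction (necessity), I would argue: suppose $C_1 \gg C_2$, i.e. $C_2 \prec C_1$. Applying Proposition \ref{pro3.3} with the roles $C_1 \leftrightarrow C_2$ swapped gives $J_{C_2} \le J_{C_1}$, that is $J_{C_1} \ge J_{C_2}$. Note this direction does not even require $C_1, C_2 \in \mathcal{F}$; it holds for arbitrary copulas, since it only uses monotonicity of $t \mapsto t^2$ on $[0,1]$ inside the defining integral of $J_C$ in Definition \ref{de3.1}.

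For the ``only if'' direction (sufficiency), I would invoke the hypothesis $C_1, C_2 \in \mathcal{F}$, the class of concordance-ordered copulas. Assuming $J_{C_1} \ge J_{C_2}$, Proposition \ref{pro3.4} applies verbatim (with the $(\ge)$ branch) and yields $C_1(u,v) \ge C_2(u,v)$ for all $u,v \in [0,1]$, which is precisely $C_1 \gg C_2$. Combining the two implications gives the stated equivalence, and the proof is complete.

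The only real subtlety worth flagging is the indispensability of the hypothesis $C_1, C_2 \in \mathcal{F}$ in the converse: without it, Example~3.3 (which also applies to this setting) shows $J_{C_1} \ge J_{C_2}$ can hold while the copulas fail to be concordance-ordered in the matching direction. Membership in $\mathcal{F}$ guarantees the two copulas are pointwise comparable, so the scalar inequality $J_{C_1} \ge J_{C_2}$ is enough to pin down which way the comparison goes; this is where the argument of Proposition \ref{pro3.4} does its work, and it is the step I would present with the most care (or simply cite, since it is already established above).
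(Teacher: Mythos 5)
Your proof is correct and follows essentially the same route as the paper: both reduce ``more PQD'' to the pointwise inequality $C_1(u,v)\ge C_2(u,v)$ and then pass to the squared integrands, with membership in $\mathcal{F}$ supplying the pointwise comparability needed for the converse. The only difference is presentational --- you cite Propositions \ref{pro3.3} and \ref{pro3.4} where the paper rewrites the chain of inequalities inline, and you are rather more explicit than the paper about exactly where the hypothesis $C_1,C_2\in\mathcal{F}$ is actually used.
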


\begin{proof}
	It is well known that $C_1$ is more PQD than $C_2$ if and only if $C_{1}(u,v)\ge C_{2}(u,v)$ for all $(u,v)\in[0,1]\times[0,1]$ (see \cite{joe1997multivariate}). Thus,
	$$
	 C^2_1(u,v)\ge C^2_2(u,v)
	\Leftrightarrow \frac{1}{4}\int_{0}^{1}\int_{0}^{1}C^2_1(u,v)dudv\ge \frac{1}{4}\int_{0}^{1}\int_{0}^{1}C^2_2(u,v)dudv\Leftrightarrow J_{C_1}\ge J_{C_2},
	$$
	as desired.
\end{proof}

Next, we observe that a similar result for SCEx also holds. The proof is omitted here due to its simplicity. 

\begin{proposition}\label{prop5.4}
	Suppose two survival copulas $\bar C_1,\bar C_2\in \mathcal{G}$. Then,
$J_{\bar C_1}\ge J_{\bar C_2}$ if and only if $\bar C_1$ is more PQD than $\bar C_2$.
\end{proposition}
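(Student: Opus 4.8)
The plan is to mirror the argument already used for Proposition \ref{pro5.3}, replacing the distributional copula by the survival copula throughout, and to invoke Proposition \ref{prop4.3} for the nontrivial implication. Recall first that $\bar C_1$ being more PQD than $\bar C_2$ means precisely that $\bar C_1(u,v)\ge \bar C_2(u,v)$ for all $(u,v)\in[0,1]\times[0,1]$ (the survival analogue of the concordance characterization of \cite{joe1997multivariate}). So the statement reduces to showing that, inside the class $\mathcal{G}$, the pointwise ordering of survival copulas is equivalent to the ordering of their SCEx values.

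For the ``if'' direction I would assume $\bar C_1(u,v)\ge \bar C_2(u,v)$ on the unit square. Since survival copulas take values in $[0,1]$ and $t\mapsto t^2$ is nondecreasing there, this gives $\bar C_1^2(u,v)\ge \bar C_2^2(u,v)$ pointwise; integrating over $[0,1]^2$ and multiplying by $\frac14$ yields $J_{\bar C_1}\ge J_{\bar C_2}$ directly from Definition \ref{de4.1}. No hypothesis beyond the statement is needed here, and the concordance-ordered assumption on $\mathcal{G}$ plays no role in this half.

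For the ``only if'' direction I would use the membership $\bar C_1,\bar C_2\in\mathcal{G}$. Within a concordance-ordered class the two survival copulas are comparable, and Proposition \ref{prop4.3} tells us that $J_{\bar C_1}\ge J_{\bar C_2}$ forces $\bar C_1(u,v)\ge \bar C_2(u,v)$ for all $u,v$, which is exactly the assertion that $\bar C_1$ is more PQD than $\bar C_2$. Combining the two implications gives the equivalence.

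The only delicate point---the same one that already surfaces in Proposition \ref{pro3.4} and Proposition \ref{prop4.3}---is that passing from integral dominance to pointwise dominance genuinely requires the concordance-ordered hypothesis; Example 3.3 shows this can fail without it. Thus the ``main obstacle'' is merely to carry this hypothesis through and cite Proposition \ref{prop4.3} at the right place; everything else is the monotonicity of squaring on $[0,1]$ together with the definition of SCEx, so the proof is genuinely short.
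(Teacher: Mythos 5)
Your proposal is correct and follows essentially the route the paper intends: the paper omits this proof, pointing to the analogous Proposition \ref{pro5.3}, whose argument is exactly the pointwise characterization of ``more PQD'' plus monotonicity of squaring on $[0,1]$ and the definition of SCEx. If anything, your version is slightly more careful than the paper's template, since you isolate the reverse implication and explicitly route it through the concordance-ordered hypothesis via Proposition \ref{prop4.3}, whereas the paper's Proposition \ref{pro5.3} writes the integral step as a bare ``$\Leftrightarrow$''.
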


The following example validates Proposition \ref{pro5.3}.

\begin{example}\label{ex5.1}
		Consider the extended FGM family of copulas, given in Table $2$. Here, $C_1\gg C_p,$ for $p\ge1$ and $0\le \theta\le 1$. We obtain 
		\begin{align*}
		J_{C_p}=\frac{1}{4}\left[\frac{1}{9}+8\theta\left\{\frac{1}{(2+p)^2}-\frac{p^2+4p+2}{(1+p)^2(3+p)^2}\right\}+\theta^2\left\{\frac{1}{(1+p)^2}-\frac{8(2p^2+4p+1)}{(1+2p)^2(3+2p)^2}\right\}\right].
		\end{align*}
		From Figure $4(a)$, it is clear that $J_{C_1}\ge J_{C_p}$ for $p=2$, validating the desired result.
\end{example}

\begin{figure}[h!]\label{fig2}
	\centering
	\subfigure[]{\label{c1}\includegraphics[height=1.5in]{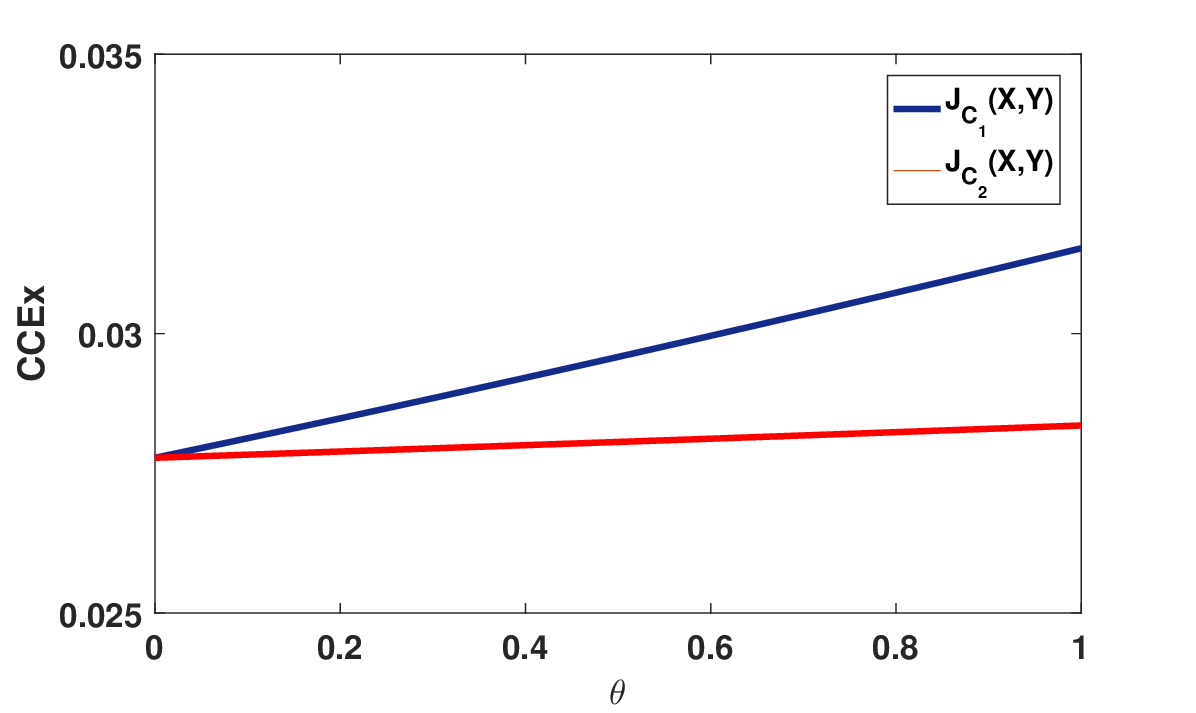}}
	\subfigure[]{\label{c1}\includegraphics[height=1.5in]{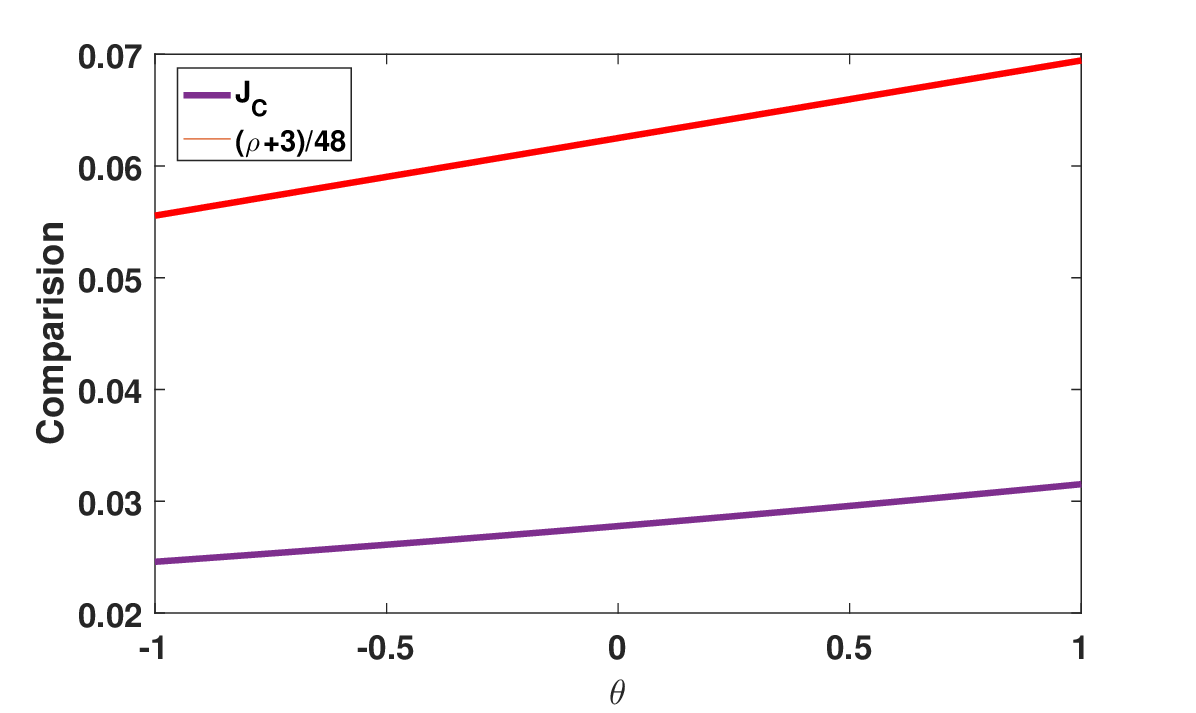}}
	\caption{$(a)$ Graphs for CCEx for extended FGM family with $p=1$ and  $p=2$ with respect to $\theta\in[0,1]$ in Example \ref{ex5.1}. $(b)$ Graphs for CCEx and $\frac{1}{48}(\rho+3)$ for FGM copula  with respect to $|\theta|\le1$ as in Example \ref{ex5.4}.}
\end{figure}

Further, we consider an example for the purpose of illustration of Proposition \ref{prop5.4}.

{\begin{example}\label{ex5.2}
		Consider Cuadras-Auge family of survival copulas 
		$$\bar C^\alpha(u,v)=
		\begin{cases}
		u+v-1+(1-u)(1-v)^{1-\alpha}, &~u\ge v;
		\\
		u+v-1+(1-u)^{1-\alpha}(1-v), &~ v\ge u,\\
		\end{cases}$$
		where $\alpha\in[0,1]$. For  $\alpha,\beta\in[0,1]$ and $\alpha\ge \beta$, it can be established that $\bar C^\alpha(u,v)\ge \bar C^{\beta}(u,v)$. The SCEx of the Cuadras-Auge family is obtained as
		\begin{align}\label{eq5.2}
		J_{\bar C^\alpha}=	\frac{1}{12}\bigg[\frac{1}{2}+\frac{2\alpha-3}{(2-\alpha)(3-\alpha)}+\frac{1}{(3-2\alpha)}\bigg].
		\end{align}
		The numerical values of $J_{\bar C^\alpha}$ given by (\ref{eq5.2}) are provided in Table $9$ for different values of $\alpha$, which clearly suggests that $J_{\bar C^\beta}\le J_{\bar C^\alpha}.$
		
				\begin{table}[h!]
			\caption { The numerical values of SCEx of Cuadras-Auge family of survival copulas for different values of $\alpha$ in $[0,1]$ in Example \ref{ex5.2}.}
			\centering 
			\scalebox{0.94}{\begin{tabular}{c c c c c c c c c c } 
					\hline\hline\vspace{.1cm} 
					$\alpha$ & 0.1& 0.2 & 0.3& 0.4& 0.5& 0.6& 0.7& 0.8& 0.9 \\
					\hline
					$J_{\bar C^\alpha}$ & 0.02908 & 0.03073 & 0.03281& 0.03547& 0.03889& 0.04332& 0.04916& 0.05699& 0.06782	\\	[1EX]
					\hline
			\end{tabular}} 
			\label{tb1} 
		\end{table}
\end{example}}

Suppose $\bar C_\theta(u,v)$ is a specified family of copulas indexed by a parameter $\theta\in \Theta$ and for all $u,v\in[0,1]$. The totally ordered  family $\{\bar C_\theta(u,v)\}, \theta\in\Theta$ is positively (negatively) ordered whenever $\bar C_{\theta_1}\prec(\succ)\bar C_{\theta_2}$ for $\theta_1\le \theta_2$, $\theta_1,\theta_2\in\Theta$. It is also true that $\bar C_{\theta_1}\prec(\succ)\bar C_{\theta_2} \Leftrightarrow  C_{\theta_1}\prec(\succ) C_{\theta_2}$. For details, see \cite{nelsen2006introduction}, p. $39$.

\begin{proposition}\label{pro5.4}
	We have $J_{C_{\theta_1}}\ge(\le)J_{ C_{\theta_2}}$, for a positively (negatively) ordered family of copulas $\{ C_{\theta}\}.$
\end{proposition}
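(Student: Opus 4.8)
The plan is to obtain the statement as an immediate specialization of the concordance-monotonicity property of the CCEx already recorded in Proposition \ref{pro3.3}. First I would translate the hypothesis into a pointwise inequality: by definition, saying that $\{C_\theta\}$ is a positively (negatively) ordered family means that for any $\theta_1\le\theta_2$ in $\Theta$ one has $C_{\theta_1}\prec(\succ)C_{\theta_2}$, i.e. $C_{\theta_1}(u,v)\le(\ge)C_{\theta_2}(u,v)$ for every $(u,v)\in[0,1]^2$. This is exactly the comparison between two copulas that the hypothesis of Proposition \ref{pro3.3} requires.

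Then I would simply apply Proposition \ref{pro3.3} with $C_1=C_{\theta_1}$ and $C_2=C_{\theta_2}$ to read off the corresponding ordering between $J_{C_{\theta_1}}$ and $J_{C_{\theta_2}}$. For completeness the one-line mechanism can be spelled out: since copulas are non-negative on the unit square, the pointwise inequality is preserved under squaring, so $C_{\theta_1}^2(u,v)\le(\ge)C_{\theta_2}^2(u,v)$ on $[0,1]^2$; integrating over $[0,1]^2$ and multiplying by $\frac{1}{4}$, monotonicity of the integral together with Definition \ref{de3.1} yields the claimed inequality between the two CCEx values.

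I do not expect any genuine obstacle here: this proposition is essentially a restatement of Proposition \ref{pro3.3} in the language of one-parameter totally ordered families, and the only facts used are the definition of a positively/negatively ordered family, the equivalence $\bar C_{\theta_1}\prec(\succ)\bar C_{\theta_2}\Leftrightarrow C_{\theta_1}\prec(\succ)C_{\theta_2}$, and the monotonicity of $t\mapsto t^{2}$ on $[0,\infty)$. The only point requiring a little care is bookkeeping of the direction of the inequalities, i.e. tracking how the assumption $\theta_1\le\theta_2$ propagates through the concordance order into the ordering of $J_{C_{\theta_1}}$ and $J_{C_{\theta_2}}$; once that correspondence is fixed the argument is routine, and an analogous statement for the SCEx follows verbatim using Proposition \ref{prop4.2}.
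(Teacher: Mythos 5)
Your reduction to Proposition \ref{pro3.3} (equivalently, squaring the pointwise concordance inequality and integrating) is exactly the ``straightforward'' argument the paper omits, so the approach matches. One concrete caution on the direction bookkeeping you flag but do not resolve: with the paper's convention that a positively ordered family satisfies $C_{\theta_1}\prec C_{\theta_2}$ for $\theta_1\le\theta_2$, your chain literally yields $J_{C_{\theta_1}}\le J_{C_{\theta_2}}$, which is the reverse of the displayed conclusion $J_{C_{\theta_1}}\ge J_{C_{\theta_2}}$; the proposition is only consistent if one reads it with $\theta_1\ge\theta_2$, as the paper's own Nelsen-copula example does. So you should state explicitly which ordering of $\theta_1,\theta_2$ you assume, since otherwise the proof as written establishes the opposite inequality to the one claimed.
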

\begin{proof}
	The proof is straightforward, and hence it is omitted. 
\end{proof}

The following example justifies Proposition \ref{pro5.4}.

\begin{example}
		For Nelsen's copula given in Table $1$, it is clear that $\theta_1\ge \theta_2$ implies $C_{\theta_1}\ge C_{\theta_2}$.
		We obtain
		\begin{align}\label{eq5.3*}
		J_{C_\theta}=\frac{1}{4}\left[\frac{1}{9}+0.1389\theta+2.58\theta^2\right].
		\end{align}
		From (\ref{eq5.3*}), we can easily observe that $J_{C_{\theta_1}}\ge J_{C_{\theta_2}},$ for $\theta_1\ge\theta_2$.
\end{example}

\begin{remark}
	A result similar to Proposition \ref{pro5.4} can be obtained for the case of SCEx.
\end{remark}

Now, we recall some dependence measures based on copula, and then establish relations with CCEx. Suppose $(X,Y)$ is a bivariate random vector with the corresponding copula $C(u,v)$ for all $u,v\in[0,1]$. Then, 
\begin{itemize}
	\item Spearman's rho: $\rho=12\int_{0}^{1}\int_{0}^{1}C(u,v)dudv-3$;
	\item Kendall's tau: $\tau=4\int_{0}^{1}\int_{0}^{1}c(u,v)C(u,v)dudv-1$;
	\item Blest's measure of rank correlation: $\eta=24\int_{0}^{1}\int_{0}^{1}(1-u)C(u,v)dudv-2$.
\end{itemize}
\cite{daniels1950rank} established a relation between $\rho$ and $\tau$ as follows
\begin{align}\label{eq5.3}
-1\le3\tau-2\rho\le1.
\end{align}
\cite{durbin1951inversions} demonstrated another second set of universal inequalities relating $\rho$ and $\tau$, given by 
\begin{equation} \label{eq5.4}
\left.
\begin{array}{ll}
& 1+\rho\ge\frac{(1+\tau)^2}{2}
\\
\
& 1-\rho\ge\frac{(1-\tau)^2}{2}
\end{array}
\Bigg\}\right.
\end{equation}

In the following, we obtain relations between CCEx and Spearman's rho, Kendall's tau and Blest's measure of rank correlation.			
\begin{proposition}\label{pro5.7}
	Suppose $C(\cdot,\cdot)$ is  a copula corresponding to $(X,Y)$. Then, 
	\begin{align}\label{eq5.5}
	J_C\le \frac{1}{48}(\rho+3).
	\end{align}
\end{proposition}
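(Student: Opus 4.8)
The plan is to reduce the claimed inequality to the pointwise bound $C^2(u,v)\le C(u,v)$ on the unit square. First I would recall that every copula satisfies $0\le C(u,v)\le 1$ for all $u,v\in[0,1]$ (this follows from the boundary and monotonicity conditions in the definition of a copula given in Section 1, since $0=C(0,v)\le C(u,v)\le C(1,v)=v\le 1$). Consequently $C^2(u,v)\le C(u,v)$ holds for every $(u,v)\in[0,1]\times[0,1]$.

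Next I would integrate this inequality over $[0,1]^2$ and multiply by $\tfrac14$, obtaining
\begin{align*}
J_C=\frac{1}{4}\int_{0}^{1}\int_{0}^{1}C^2(u,v)\,du\,dv\le \frac{1}{4}\int_{0}^{1}\int_{0}^{1}C(u,v)\,du\,dv.
\end{align*}
Finally I would rewrite the right-hand side using the copula representation of Spearman's rho, $\rho=12\int_{0}^{1}\int_{0}^{1}C(u,v)\,du\,dv-3$, which gives $\int_{0}^{1}\int_{0}^{1}C(u,v)\,du\,dv=\tfrac{1}{12}(\rho+3)$, and hence
\begin{align*}
J_C\le \frac{1}{4}\cdot\frac{1}{12}(\rho+3)=\frac{1}{48}(\rho+3),
\end{align*}
which is the desired bound.

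There is essentially no serious obstacle here; the only point requiring a moment's care is justifying $C(u,v)\le 1$ (hence the pointwise squaring inequality) directly from the copula axioms, and correctly substituting the integral $\int\int C$ in terms of $\rho$. If desired, one could also note that equality is approached when $C$ is concentrated near the extremes $0$ and $1$ of its range, but this is not needed for the statement. I would keep the write-up to these three short steps.
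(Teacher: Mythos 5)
Your proposal is correct and follows essentially the same route as the paper's proof: the pointwise inequality $C^2(u,v)\le C(u,v)$, integration over the unit square, and substitution of $\int_0^1\int_0^1 C(u,v)\,du\,dv=\tfrac{1}{12}(\rho+3)$ from the copula representation of Spearman's rho. Your added justification that $0\le C\le 1$ from the copula axioms is a small point the paper leaves implicit.
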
			

\begin{proof}
	We have
	\begin{align}\label{eq5.6}
	C(u,v)\ge C^2(u,v)
	\Rightarrow \frac{1}{4}\int_{0}^{1}\int_{0}^{1}C^2(u,v)dudv\le\frac{1}{4}\int_{0}^{1}\int_{0}^{1}C(u,v)dudv.
	\end{align}
	Further, 
	\begin{align}\label{eq5.7}
	\int_{0}^{1}\int_{0}^{1}C(u,v)dudv=\frac{1}{12}(\rho+3).
	\end{align}
	Using (\ref{eq5.6}) and (\ref{eq5.7}), we get (\ref{eq5.5}), as desired.
\end{proof}

The following example validates Proposition \ref{pro5.7}.
\begin{example}\label{ex5.4}
	For FGM copula, we obtain  $$J_C=\frac{1}{4}\left(\frac{1}{9}+\frac{\theta}{72}+\frac{\theta^2}{900}\right)~\mbox{and}~\rho=\frac{\theta}{3}.$$
	Now, from Figure $4(b)$, the validation of  Proposition \ref{pro5.7} is clear.
\end{example}

\begin{proposition}\label{prop5.8}
	Let $(X,Y)$ be a random vector with copula $C(u,v)$. Then,
$$J_C\le \min\left\{\frac{1}{96}(3\tau+7), \frac{1}{96}[8-(1-\tau)^2]\right\}.$$
\end{proposition}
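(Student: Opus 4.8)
The plan is to bound $J_C$ by two separate quantities and then take their minimum. Both bounds start from the elementary inequality $C(u,v)\ge C^2(u,v)$ (valid since $0\le C\le 1$), which was already used in Proposition \ref{pro5.7}; integrating gives
\begin{align}\label{eq5.8}
J_C=\frac{1}{4}\int_{0}^{1}\int_{0}^{1}C^2(u,v)\,dudv\le\frac{1}{4}\int_{0}^{1}\int_{0}^{1}C(u,v)\,dudv.
\end{align}
So everything reduces to bounding $\int_0^1\int_0^1 C(u,v)\,dudv$ in terms of $\tau$ rather than $\rho$. The first step is therefore to convert the $\rho$-bound of Proposition \ref{pro5.7} into a $\tau$-bound using the Daniels inequality $3\tau-2\rho\le 1$ from (\ref{eq5.3}), i.e. $\rho\ge\frac{3\tau-1}{2}$ — but wait, we need an \emph{upper} bound on $\rho$, so instead I would use the other half, $-1\le 3\tau-2\rho$, rearranged as $\rho\le\frac{3\tau+1}{2}$. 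Substituting into $J_C\le\frac{1}{48}(\rho+3)$ yields $J_C\le\frac{1}{48}\big(\frac{3\tau+1}{2}+3\big)=\frac{1}{96}(3\tau+7)$, which is the first term in the minimum.

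For the second term, I would combine (\ref{eq5.8}) (rewritten via (\ref{eq5.7}) as $J_C\le\frac{1}{48}(\rho+3)$) with the Durbin–Stuart inequality $1-\rho\ge\frac{(1-\tau)^2}{2}$ from (\ref{eq5.4}), which gives $\rho\le 1-\frac{(1-\tau)^2}{2}$. Plugging this in,
\begin{align*}
J_C\le\frac{1}{48}(\rho+3)\le\frac{1}{48}\Big(4-\frac{(1-\tau)^2}{2}\Big)=\frac{1}{96}\big(8-(1-\tau)^2\big),
\end{align*}
which is the second term. Since both bounds hold simultaneously, $J_C$ is at most their minimum, completing the argument.

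The proof is essentially a routine chaining of the earlier Proposition \ref{pro5.7} with the two classical $\rho$–$\tau$ inequality sets already quoted in (\ref{eq5.3}) and (\ref{eq5.4}); no genuinely new estimate is needed. The only point requiring a little care is the direction of the Daniels inequality: one must pick the half $2\rho\le 3\tau+1$ (not $2\rho\ge 3\tau-1$) so that an \emph{upper} bound on $\rho$ is produced, and similarly one must use the second of the two Durbin–Stuart inequalities, $1-\rho\ge(1-\tau)^2/2$, rather than the first. I would also remark that, as with Proposition \ref{pro5.7}, an analogous statement holds verbatim for the survival copula extropy $J_{\bar C}$ with $\tau$ replaced by the Kendall's tau of the survival copula, since $C(u,v)\ge C^2(u,v)$ is replaced by $\bar C(u,v)\ge\bar C^2(u,v)$ and the same two inequalities apply.
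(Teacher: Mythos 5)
Your proof is correct and follows essentially the same route as the paper: both arguments chain the bound $J_C\le\frac{1}{48}(\rho+3)$ from Proposition \ref{pro5.7} with the half $-1\le 3\tau-2\rho$ of Daniels' inequality (\ref{eq5.3}) to get $\frac{1}{96}(3\tau+7)$, and with the Durbin--Stuart inequality $1-\rho\ge\frac{(1-\tau)^2}{2}$ from (\ref{eq5.4}) to get $\frac{1}{96}[8-(1-\tau)^2]$, then take the minimum. Your care about which half of each inequality yields an upper bound on $\rho$ matches exactly what the paper implicitly does.
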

\begin{proof}
 From (\ref{eq5.3}), we have
	\begin{align}\label{eq5.8}
&\frac{1}{96}(3\tau+7)\ge \frac{1}{48}(\rho+3)\ge \frac{1}{96}(3\tau+5).
	\end{align}
	Now, from (\ref{eq5.5}) and (\ref{eq5.8}), we get 
	\begin{eqnarray}\label{eq5.9*}
	J_{C}\le \frac{1}{96}(3\tau+7).
	\end{eqnarray}
Further,  from (\ref{eq5.4}) we obtain 
\begin{align}\label{eq5.9}
 \frac{1}{48}(\rho+3)\le \frac{1}{96}[8-(1-\tau)^2].
\end{align}
Moreover, using (\ref{eq5.9}) in (\ref{eq5.5}), we get
\begin{align}\label{eq5.11*}
J_C\le\frac{1}{96}[8-(1-\tau)^2].
\end{align}
Now, the desired result follows from (\ref{eq5.9*}) and (\ref{eq5.11*}). This completes the proof.
\end{proof}

\begin{proposition}\label{prop5.7}
	Suppose $(X,Y)$ is a random vector with copula $C(\cdot,\cdot)$. Then, 
	\begin{align}\label{eq5.10}
	J_C-J^u_C\le\frac{1}{96}(\eta+2),
	\end{align}
	where $J^u_C(X,Y)=\frac{1}{4}\int_{0}^{1}\int_{0}^{1}uC(u,v)dudv$  is called weighted CCEx with weight function $u$.
\end{proposition}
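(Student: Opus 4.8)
The plan is to reduce the claim to the elementary pointwise bound $C^2(u,v)\le C(u,v)$ already used in the proof of Proposition \ref{pro5.7}, and then to recognize the resulting right-hand side as a multiple of Blest's measure $\eta$.

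First I would write, straight from Definition \ref{de3.1} and the stated definition of $J^u_C$,
\begin{align*}
J_C-J^u_C=\frac14\int_0^1\int_0^1\big[C^2(u,v)-uC(u,v)\big]\,du\,dv.
\end{align*}
Since every copula satisfies $0\le C(u,v)\le 1$ on $[0,1]^2$, we have $C^2(u,v)\le C(u,v)$, and therefore, because $1-u\ge 0$,
\begin{align*}
C^2(u,v)-uC(u,v)\le C(u,v)-uC(u,v)=(1-u)C(u,v)
\end{align*}
for all $u,v\in[0,1]$. Integrating this inequality over the unit square and multiplying by $\tfrac14$ gives
\begin{align*}
J_C-J^u_C\le\frac14\int_0^1\int_0^1(1-u)C(u,v)\,du\,dv.
\end{align*}

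Next I would invoke the definition of Blest's measure, $\eta=24\int_0^1\int_0^1(1-u)C(u,v)\,du\,dv-2$, which yields $\int_0^1\int_0^1(1-u)C(u,v)\,du\,dv=(\eta+2)/24$. Substituting this into the last display produces $J_C-J^u_C\le\tfrac1{96}(\eta+2)$, which is exactly (\ref{eq5.10}), completing the argument.

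There is essentially no obstacle here: the heart of the proof is the same convexity-free estimate $C^2\le C$ used for the Spearman bound (\ref{eq5.6}), combined with the bookkeeping identity $\tfrac14\int\int(1-u)C=\tfrac1{96}(\eta+2)$. The only point that needs a little care is keeping the weight $u$ (equivalently $1-u$) attached correctly throughout, so that the left-hand side is genuinely $J_C-J^u_C$; one could equivalently start from $J_C\le\tfrac14\int\int C$ as in (\ref{eq5.6}) and subtract $J^u_C$ from both sides, noting that $\tfrac14\int\int C-J^u_C=\tfrac14\int\int(1-u)C=\tfrac1{96}(\eta+2)$.
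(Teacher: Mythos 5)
Your proof is correct and follows essentially the same route as the paper: the pointwise bound $C^2(u,v)\le C(u,v)$ weighted by $1-u\ge 0$, combined with the identity $\int_0^1\int_0^1(1-u)C(u,v)\,du\,dv=\frac{1}{24}(\eta+2)$. If anything, your bookkeeping is slightly cleaner --- the paper's displayed chain ends with $\frac{1}{4}\int_0^1\int_0^1 C^2(u,v)\,du\,dv-\frac{1}{4}\int_0^1\int_0^1 uC^2(u,v)\,du\,dv$ on the left (i.e.\ with $uC^2$ where the stated $J^u_C$ has $uC$), whereas your version lands exactly on $J_C-J^u_C$ as defined in the proposition; the two are reconciled by noting $uC^2\le uC$.
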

\begin{proof}
	From the expression of Blest's measure of rank correlation, we have
	\begin{align}\label{eq5.11}
	 \int_{0}^{1}\int_{0}^{1}(1-u)C(u,v)dudv=\frac{1}{24}(\eta+2).
	\end{align}
	Again, 
	\begin{align*}
	& C(u,v)\ge C^2(u,v)\\
	\Rightarrow& \frac{1}{4}\int_{0}^{1}\int_{0}^{1}(1-u)C^2(u,v)dudv\le \frac{1}{4}\int_{0}^{1}\int_{0}^{1}(1-u)C(u,v)dudv\\
	\Rightarrow& \frac{1}{4}\int_{0}^{1}\int_{0}^{1}C^2(u,v)dudv-\frac{1}{4}\int_{0}^{1}\int_{0}^{1}uC^2(u,v)dudv\le\frac{1}{96}(\eta+2),~~~\text{[Using (\ref{eq5.11})]},
	\end{align*}
	proving the required result.
\end{proof}

The next example validates Proposition \ref{prop5.7}.

\begin{example}\label{ex5.5}
	Consider Cauadras-Auge family of copulas, where we take $\text{min}(u,v)=u$ for illustration purpose. Thus, the copula reduces to $C(u,v)=uv^{1-\alpha},~\alpha\in[0,1]$. Further, we obtain 
	\begin{eqnarray}
	 J_C&=&\frac{1}{12(2-\alpha)};\\
	J^u_C&=&\frac{1}{12(3-\alpha)};\\
	\eta&=&\frac{4}{2-\alpha}-2.
	\end{eqnarray}
	The numerical values of $J_C-J^u_C$ and $\frac{1}{96}(\eta+2)$ for different values of $\alpha$ are provided in Table $10$, validating (\ref{eq5.10}).
	
	\begin{table}[h!]
	\caption {Values of  $J_C-J^u_C$ and $\frac{1}{96}(\eta+2)$ for the copula in Example \ref{ex5.5}.}
	\centering 
	\scalebox{.9}{\begin{tabular}{c c c c c c c c c c} 
			\hline\hline\vspace{.1cm} 
			$\alpha$ & $0.1$ & $0.2$ & $0.3$& $0.4$& $0.5$ & $0.6$ &$0.7$ & $0.8$ & 0.9\\
			\hline
			$J_{ C}-J^u_C$ &$0.01512$ &$0.01653$ &$0.01815$& $0.02003$&$0.02222$&$0.02480$&$0.02787$& $0.03157$ & $0.03608$\\[1EX]
			\hline
			$\frac{1}{96}(\eta+2)$& $ 0.02193$ & $0.02315$& $0.02451$& $0.02604$& $0.02778$& $0.02976$ & $0.03205$& $0.03472$ & $0.03788$\\[1EX]
			\hline
	\end{tabular}} 
\end{table}
\end{example}

\begin{remark}
	Based on the survival copula function, the Spearman's rho, Kendall's tau, and Blest's measure of rank correlation can be defined respectively as follows:
	\begin{eqnarray}
		\rho&=&12\int_{0}^{1}\int_{0}^{1}\bar C(u,v)dudv-3;\label{eq5.17}\\
		\tau&=&4\int_{0}^{1}\int_{0}^{1}\bar c(u,v)[\bar C(u,v)-u-v+1]dudv-1;\label{eq5.18}\\
		\eta&=&24\int_{0}^{1}\int_{0}^{1}u\bar C(u,v)dudv-4.
	\end{eqnarray}
	Using (\ref{eq5.17}) and (\ref{eq5.18}), similar inequalities in Propositions  \ref{pro5.7} and \ref{prop5.8} can be obtained. 
\end{remark}

\section{An application}
In the preceding sections, we have discussed various properties of CCEx and SCEx. In the present section, we provide an application of CCEx and SCEx by proposing estimator using  empirical copula and empirical survival copula. We call the proposed estimators as the resubstitution estimators.
Suppose $\{(x_r,y_r)\}^n_{r=1}$ is a random sample of size $n$ from a continuous bivariate distribution. Then, the empirical copula and empirical survival copula are defined as 
\begin{align}\label{eq6.1}
C\left(\frac{i}{n},\frac{j}{n}\right)=\frac{\text{(number of pairs in the sample with $x\le x_{(i)}, y\le y_{(j)}$)}}{n}
\end{align}
and 
\begin{align}\label{eq6.2}
\bar C\left(\frac{i}{n},\frac{j}{n}\right)=\frac{\text{(number of pairs in the sample with $x> x_{(i)}, y> y_{(j)}$)}}{n},
\end{align}
respectively, where $x_{(i)}$ and $y_{(j)},$ for $1\le i,j\le n$ denote the ordered statistics of the sample. Please refer to \cite{nelsen2006introduction} for more discussion about this topic.
\begin{definition}\label{def6.1}
Suppose $(X,Y)$ is a random vector with corresponding empirical copula and survival copula $C\left(\frac{i}{n},\frac{j}{n}\right)$ and $\bar C\left(\frac{i}{n},\frac{j}{n}\right)$, respectively. Then, the resubstitution or plug-in estimator of
\begin{itemize}
\item CCEx is  defined as 
\begin{align}
\hat{J}_C(X,Y)=\frac{1}{670n^2}\sum_{i=1}^{n}\sum_{j=1}^{n}C^2\left(\frac{i}{n},\frac{j}{n}\right);
\end{align}
\item  SCEx is defined as 
\begin{align}
\hat{J}_{\bar C}(X,Y)=\frac{1}{670n^2}\sum_{i=1}^{n}\sum_{j=1}^{n}\bar C^2\left(\frac{i}{n},\frac{j}{n}\right).
\end{align}
\end{itemize}
\end{definition}

Now, we consider a real life data set for computing the values of the estimators proposed in Definition \ref{def6.1}. The datasets are available in \cite{kumar2007copula}. The data set represents  to an investigation on 20 individuals for isolated aortic regurgitations
before and after surgery and 20 persons for isolated mitral regurgitation. Namely data $(X)$ and  $(Y)$ denotes as pre-operative ejection fraction and post-operative ejection fraction respectively. The data set is given below:

\begin{table}[h!]
			\centering 
			\scalebox{0.90}{\begin{tabular}{|c |c|  } 
					\hline 
					$X$& 0.29,~ 0.36,~ 0.39,~ 0.41,~ 0.50,~ 0.53,~ 0.54, 0.55,~ 0.56,~ 0.56,~ 0.56,~ 0.58,~ 0.60,~ 0.60,~ 0.62,\\
					~&~ 0.64, 0.64,~ 0.67,~ 0.80,~ 0.87\\
					\hline
					$Y$& 0.17,~ 0.24,~ 0.26,~ 0.26,~ 0.27,~ 0.29,~ 0.30,~ 0.32, ~0.33, ~0.33,~ 0.34,~ 0.38,~ 0.47, ~0.47,~ 0.50,\\
					~&~0.56, ~0.58, ~0.59,~ 0.62, ~0.63	\\	[1EX]
					\hline
			\end{tabular}} 
			\label{tb1} 
		\end{table}

The estimated values are $\hat{J}_{\bar C}(X,Y)=0.05691$ and $\hat{J}_{ C}(X,Y)=0.07683$. The data set gives satisfactory evidence for the PQD property of the underlying copula since clearly
$$C\left(\frac{i}{n},\frac{j}{n}\right)\ge \frac{i}{n}\times \frac{j}{n}.$$
Based on the data set,  the Pearson correlation
coefficient $r = 0.6870$, Kendall’s rank correlation $\tau = 0.5050$ and Spearman’s
rank correlation $\rho = 0.6970$ are obtained by \cite{kumar2007copula} . Thus all the measures indicate positive dependence
between the two sets of observations.
We observe that the estimated values satisfy the results in Proposition \ref{prop5.1}, Proposition \ref{pro5.7}, Proposition \ref{prop5.8} and  Remark \ref{rem5.1}.

\section{Conclusion}
In this work, the information measures: copula extropy, CCEx and SCEx have been proposed based on density copula, copula and survival copula, respectively. We have studied various properties of the proposed information measures. We have discussed the effectiveness for applying monotone transformation.  Other information measures: horizontal, vertical and diagonal copula extropy based on sections of copula function have been provided and studied. The dual copula extropy and co-copula extropy have been introduced. In addition,   we have established  relations between cumulative copula extropy and survival copula extropy respectively. Further, some relations have been established between the proposed measures  and some dependence measures: Spearman's rho, Kendall's tau, and Blest's measure of rank correlation. Finally, for solving practical problems, we have introduced resubstitution estimators of CCEx and SCEx.

\section*{Acknowledgements}  The author Shital Saha thanks the UGC (Award No. $191620139416$), India, for the financial assistantship.

\section*{Conflicts of interest} Both authors declare no conflict of interest. 

\bibliography{refference}
\end{document}